\numberwithin{equation}{section}
\theoremstyle{plain}
\newtheorem{theorem}{Theorem}[section]
\newtheorem{lemma}[theorem]{Lemma}
\theoremstyle{definition}
\theoremstyle{remark}
\def\@tocline#1#2#3#4#5#6#7{\relax
  \ifnum #1>\c@tocdepth 
  \else
    \par \addpenalty\@secpenalty\addvspace{#2}%
    \begingroup \hyphenpenalty\@M
    \@ifempty{#4}{%
      \@tempdima\csname r@tocindent\number#1\endcsname\relax
    }{%
      \@tempdima#4\relax
    }%
    \parindent\z@ \leftskip#3\relax \advance\leftskip\@tempdima\relax
    \rightskip\@pnumwidth plus4em \parfillskip-\@pnumwidth
    #5\leavevmode\hskip-\@tempdima
      \ifcase #1
       \or\or \hskip 1em \or \hskip 2em \else \hskip 3em \fi%
      #6\nobreak\relax
    \hfill\hbox to\@pnumwidth{\@tocpagenum{#7}}\par
    \nobreak
    \endgroup
  \fi}
\begin{document}

\title[On the $r$-free values of the polynomial $x^2 + y^2 + z^2 +k$]
{On the $r$-free values of the polynomial $x^2 + y^2 + z^2 +k$}
\author{Gongrui Chen,\,\,\,\, Wenxiao Wang*}
\address{School of Mathematics, Shandong University \\
Jinan, Shandong 250100, China}
\email{cgr4258@gmail.com(G.-R. Chen),wxwang@mail.sdu.edu.cn(W.-X. Wang)}

\begin{abstract}
 Let $k$ be a fixed integer. We study the asymptotic formula of $R(H,r,k)$, which is the number of positive integer solutions $1\leq x, y,z\leq H$ such that the polynomial $x^2+y^2+z^2+k$ is $r$-free. We obtained the asymptotic formula of $R(H,r,k)$ for all $r\ge 2$. Our result is new even in the case $r=2$. We proved that $R(H,2,k)= c_kH^3 +O(H^{9/4+\varepsilon})$, where $c_k>0$ is a constant depending on $k$. This improves upon the error term $O(H^{7/3+\varepsilon})$ obtained by Zhou and Ding \cite{ZD}.
\end{abstract}
\thanks{* Corresponding author}
\thanks{2020 Mathematics Subject Classification. 11N25, 11L05, 11L40}
\thanks{Keywords: square-free, Sali\'{e} sum, asymptotic formula}
\thanks{This work is supported by NSFC grant 11922113.}
\maketitle

\section{Introduction}

\indent

There exists an outstanding conjecture that the polynomial $x^{2}+1$ contains infinitely many primes. Iwaniec \cite{Iwa} proved there are infinitely many $n$ such that $n^{2}+1$ has at most two prime factors. So far, various authors considered the square-free values of some specific polynomials with integer coefficients. In 1931, Estermann \cite{Est} showed that
$$\sum_{1\leq x\leq H}\mu^{2}(x^{2}+1)=c_{0}H+O(H^{2/3+\varepsilon}),$$
where $c_{0}$ is an absolute constant. This error term was improved to $O(H^{7/12+\varepsilon})$ by Heath-Brown \cite{HB1}. Carlitz \cite{Car} studied  the polynomial $x^{2}+x$ and established that
$$\sum_{1\leq x\leq H}\mu^{2}(x^{2}+x)=\prod_{p}(1-\frac{2}{p^{2}})H+O(H^{2/3+\varepsilon}).$$
Later, Heath-Brown \cite{HB2} improved the error term $O(H^{2/3+\varepsilon})$ to $O(H^{7/11}\log^{7}H)$ which was further improved by Reuss \cite{Reuss} to $O(H^{0.578+\varepsilon})$. Mirsky \cite{Mir2} studied the square-free values of the polynomial $(x+a_{1})(x+a_{2})$.

In 2012 Tolev\cite{Tolev} considered the square-free values of a polynomial in two variables. Let $S_{2}(H)$ stand for the number of the square-free values of $x^{2}+y^{2}+1$ with $1\leq x,y \leq H$. Using Weil's estimate for the Kloosterman sum, Tolev proved that
$$S_{2}(H)=\prod_{p}(1-\frac{\lambda_{2}(p^{2})}{p^{4}})H^{2}+O(H^{4/3+\varepsilon}),$$
where $\lambda_{2}(q)$ is the number of the integer solutions to the following congruence equation
$$x^{2}+y^{2}+1\equiv 0 ({\rm{mod}}\ q),\quad 1\leq x,y\leq q.$$
In 2022 Zhou and Ding \cite{ZD} studied the square-free values of the polynomial $x^{2}+y^{2}+z^{2}+k$, where $k$ is a fixed (nonzero) integer. Let \begin{equation}\label{defineS3}
S_{3}(H,k)=\sum_{\substack{1\leq x,y,z \leq H}}\mu^2(x^{2}+y^{2}+z^{2}+k).
\end{equation} It was proved in \cite{ZD} that
\begin{equation}\label{eq1.2}
S_{3}(H,k)=\prod_{p}(1-\frac{\lambda(p^{2};k)}{p^{6}})H^{3}+O(H^{7/3+\varepsilon}),
\end{equation}
where $\lambda(q;k)$ is defined as
\begin{equation}\label{definelambda}
\lambda(q;k)=\sum_{\substack{1\leq x,y,z \leq q \\ x^{2}+y^{2}+z^{2}+k\equiv 0 ({\rm{mod}}\ q)}}1.
\end{equation}

In addition, some articles are devoted to the $r$-free values of polynomials in one variable. Note that for $r\ge 2$, an integer $n$ is $r$-free if $p^r\nmid n$ for all primes $p$. Let $S(H,r)$ be the number of integers $x\leq H$ such that $x^2+x$ is $r$-free. In 1932, Carlitz \cite{Car} obtained
$$S(H,r)=\prod_{p}(1-\frac{2}{p^{r}})H+O(H^{2/(r+1)+\varepsilon}).$$
This above error term was improved to $O(H^{14/(7r+8)+\varepsilon})$ by Brandes \cite{Brandes} generalizing Heath-Brown's method in \cite{HB2}. Brandes' result was further improved by Reuss \cite{Reuss}. Furthermore, Reuss \cite{Reuss} established a more general result
$$S_{r,h}(H)=c_{r,h}H+O(H^{\upsilon(r)+\varepsilon}),$$
where $S_{r,h}(H)$ stands for the number of integers $x\leq H$ such that $x$ and $x+h$ are $r$-free, $c_{r,h}$ is a  constant depending on $r$ and $h$, and $\upsilon(2)=0.578$ while $\upsilon(r)=169/(144r)$ for $r\geq 3$. Also, the $r$-free values of the polynomial $(x+a_{1})(x+a_{2})$ was considered by Mirsky \cite{Mir1} who gave the error term $O(H^{2/(r+1)+\varepsilon})$.

Inspired by the above works, we study the $r$-free values of $x^2+y^2+z^2+k$. We define
\begin{equation}\label{defineR}
R(H,r,k)=\sum_{\substack{1\leq x,y,z \leq H \\ x^{2}+y^{2}+z^{2}+k \textrm{ is } r\textrm{-free}}}1.
\end{equation}
The main result of this paper is to obtain the asymptotic formula of $R(H,r,k)$.
\begin{theorem}\label{th1}Let $r\ge 2$ be a natural number. Let $k$ be a fixed integer. Let $\varepsilon>0$ be an arbitrary small positive number. We have the following results.

(i) For $k\neq0$,
\begin{equation}\label{Rk1}
R(H,r,k) = \prod_p\left(1-\frac{\lambda(p^r;k)}{p^{3r}}\right)H^3 +O(H^{2}+H^{3/2+3/2r+\varepsilon}).
\end{equation}

(ii)
\begin{equation}\label{R01}
R(H,r,0) = \prod_p\left(1-\frac{\lambda(p^r;0)}{p^{3r}}\right)H^3 +O(H^{2+\varepsilon}+H^{9r/(5r-2)+\varepsilon}).
\end{equation}
\end{theorem}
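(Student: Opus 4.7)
The plan is to open up the $r$-freeness condition via the identity $\mathbbm{1}_{n\text{ is }r\text{-free}}=\sum_{d^r\mid n}\mu(d)$, obtaining
$$R(H,r,k)=\sum_{d}\mu(d)\, A_d(H,k),\qquad A_d(H,k)=\#\{(x,y,z)\in[1,H]^3 : d^r\mid x^2+y^2+z^2+k\}.$$
Since $|x^2+y^2+z^2+k|\le 3H^2+|k|$, only $d\ll H^{2/r}$ can contribute nontrivially; in case (ii) one must additionally keep track of triples making $x^2+y^2+z^2$ small, which is what ultimately produces the $H^{2+\varepsilon}$ term in \eqref{R01}. I would split the $d$-sum into three ranges $d\le D_1$, $D_1<d\le D_2$, and $d>D_2$, and choose $D_1,D_2$ at the end to balance the errors.

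For $d\le D_1$, partitioning $[1,H]^3$ into residue classes modulo $d^r$ gives
$$A_d(H,k)=\frac{\lambda(d^r;k)}{d^{3r}}H^3+O\!\left(\frac{\lambda(d^r;k)}{d^{2r}}H^2\right).$$
Multiplicativity of $\lambda(\cdot;k)$ at coprime arguments together with the generic bound $\lambda(p^r;k)\ll p^{2r+\varepsilon}$ (the hypersurface $x^2+y^2+z^2\equiv -k\pmod{p^r}$ being $2$-dimensional) allows one to complete the sum in $d$ and produce the Euler product $\prod_p(1-\lambda(p^r;k)/p^{3r})H^3$ in the main term, at the cost of a tail $\ll H^3 D_1^{-(r-1)+\varepsilon}$ and an accumulated boundary error $\ll H^2 D_1^{1+\varepsilon}$.

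The genuine cancellation lives in the range $d>D_1$. For fixed $(x,y)$ I would count $z\in[1,H]$ satisfying $z^2\equiv -k-x^2-y^2\pmod{d^r}$ by Poisson summation; the resulting dual exponential sums are Salié sums (with the usual $p=2$ modifications), whose explicit evaluation yields the square-root saving $\ll d^{r/2+\varepsilon}$ per $(x,y)$. Combining this pointwise saving with Cauchy--Schwarz in the $(x,y)$-average should give a bound of the shape $\sum_{D_1<d\le D_2}|A_d(H,k)-\text{main}|\ll H^\varepsilon(H^{3/2}D_2^{r/2}+H^2)$. For $d>D_2$ I would reverse the order of summation and bound the number of triples with $d^r\mid m=x^2+y^2+z^2+k$: in case (i), via the divisor bound applied to $m\ne 0$; in case (ii), via the classical estimate $r_3(m)\ll m^{1/2+\varepsilon}$ for representations as a sum of three squares, which enters because $m$ is now unbounded away from zero. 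Optimizing $D_1,D_2$ then produces the exponents $3/2+3/(2r)$ and $9r/(5r-2)$ respectively.

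The principal obstacle will be the Salié-sum step: extracting the full square-root saving uniformly over the non-squarefree modulus $d^r$ (which forces one to separate the $p$-part of $d$ from the $p$-part of $k$ and to replace Salié by Gauss-sum analogues when $p=2$), and then propagating this saving through the double sum over $(x,y)$ without losing more than $H^{\varepsilon}$. Any inefficiency here degrades the final exponent, so both the treatment of the bad moduli and the Cauchy--Schwarz rearrangement must be executed cleanly in order to reach $3/2+3/(2r)$; the improvement over the $H^{7/3+\varepsilon}$ of \cite{ZD} in the case $r=2$ comes precisely from widening the range $D_1<d\le D_2$ where the Salié-sum estimate can be applied.
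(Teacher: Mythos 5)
Your overall skeleton (M\"obius inversion over $d^r$, equidistribution in residue classes for the main term, Sali\'e sums for the cancellation, a divisor-type bound for the tail $d>D_2$) matches the paper's, but the central estimate is missing and what you assert in its place does not suffice. The paper completes the sums in \emph{all three} variables $x,y,z$, reducing the fluctuation of $D(H,d^r,k)$ to the complete sum $\lambda(q;n,m,l,k)=\sum_{x^2+y^2+z^2+k\equiv 0 (q)}e_q(nx+my+lz)$, which factors via Gauss sums into a \emph{single} Sali\'e sum $S(p^{r-\beta};k,-\overline{4}(n^2+m^2+l^2)p^{-2\beta})$; Weil's bound then gives $\lambda(q;n,m,l,k)\ll q^{1+\varepsilon}(q,n,m,l)$ for $k\neq 0$, i.e.\ square-root cancellation in two of the three dual variables at once. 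This yields a per-modulus error $\ll H^2d^{-r}+Hd^{\varepsilon}+d^{r+\varepsilon}$, which sums over $d\le\eta$ to $H^2+H\eta+\eta^{r+1}$ and balances against the tail $H^{3+\varepsilon}\eta^{1-r}$ at $\eta=H^{3/(2r)}$. Your plan completes only the $z$-sum for fixed $(x,y)$: that gives an error $\approx d^{r/2+\varepsilon}$ per pair $(x,y)$, hence $H^2d^{r/2}$ per modulus, which is far too large, and ``Cauchy--Schwarz in the $(x,y)$-average'' cannot by itself manufacture the needed cancellation --- you would have to open the square and control a genuinely new bilinear sum, which is precisely the work you have not done. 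The claimed bound $\sum_{D_1<d\le D_2}|A_d-\mathrm{main}|\ll H^{\varepsilon}(H^{3/2}D_2^{r/2}+H^2)$ is therefore unsupported.

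Moreover, even if that bound were granted, it does not prove the theorem for $r\ge 3$: balancing $H^{3/2}D_2^{r/2}$ against the tail $H^{3+\varepsilon}D_2^{1-r}$ forces $D_2=H^{3/(3r-2)}$ and an error $H^{3/2+3r/(6r-4)}$, which exceeds the claimed $O(H^{2}+H^{3/2+3/(2r)})$ as soon as $r\ge 3$ (e.g.\ $H^{15/7}$ versus $H^{2}$ at $r=3$). Two smaller points. First, your crude treatment of the range $d\le D_1$ with error $O(\lambda(d^r;k)H^2/d^{2r})$ per modulus accumulates to $H^2D_1^{1+\varepsilon}$, so $D_1$ must be essentially bounded and the three-range decomposition collapses onto the unproved middle-range estimate. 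Second, the source of the weaker exponent in case (ii) is not the tail or the triples with $x^2+y^2+z^2$ small: the paper's tail bound $D(H,q,0)\ll q^{-1}H^{3+\varepsilon}$ is the same as for $k\neq0$. The loss comes from the complete sum itself: when $k=0$ the Sali\'e sum degenerates to $S(p^{\alpha};a,0)$, for which only the bound $p^{\alpha/2}(p^{\alpha},a)^{1/2}$ holds, producing the extra factor $(q,n^2+m^2+l^2)^{1/2}$ in the bound for $\lambda(q;n,m,l,0)$ and hence the exponent $9r/(5r-2)$.
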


Note that $R(H,2,k)$ is exactly $S_{3}(H,k)$ defined in \eqref{defineS3}. In fact, Theorem \ref{th1} is also new in the case $r=2$. We have the following.
\begin{theorem}\label{th2}Let $k$ be a fixed integer. Let $\varepsilon>0$ be an arbitrary small positive number. We have
$$S_{3}(H,k)= \prod_p\left(1-\frac{\lambda(p^2;k)}{p^{6}}\right)H^3 +O(H^{9/4+\varepsilon}).$$
\end{theorem}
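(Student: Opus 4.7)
Since Theorem~\ref{th2} is the $r=2$ specialisation of Theorem~\ref{th1}, the plan is first to verify that the error terms collapse correctly. At $r=2$, part (i) of Theorem~\ref{th1} gives $O(H^2 + H^{9/4+\varepsilon})$ (since $3/2 + 3/(2r) = 9/4$) and part (ii) gives $O(H^{2+\varepsilon} + H^{9/4+\varepsilon})$ (since $9r/(5r-2) = 18/8 = 9/4$); both simplify to $O(H^{9/4+\varepsilon})$, so the claim reduces to Theorem~\ref{th1}. The substantive task is therefore to describe the strategy for Theorem~\ref{th1} at this parameter.

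The starting point is the squarefree identity $\mu^2(n) = \sum_{d^2 \mid n}\mu(d)$. Applied to $n = x^2+y^2+z^2+k$, and after isolating the degenerate case $n=0$ in a trivially small remainder, this gives
\[
S_3(H,k) = \sum_{d \leq \sqrt{3H^2+|k|}} \mu(d)\, N(H, d^2, k),
\]
where $N(H, q, k) = \#\{(x,y,z) \in [1,H]^3 : x^2+y^2+z^2+k \equiv 0 \pmod{q}\}$. The next step is to split this sum at a parameter $D$. On the short range $d \leq D$, the approach is to use orthogonality modulo $d^2$ to write $N(H, d^2, k) = \lambda(d^2;k) H^3/d^6 + E_d$, with $E_d$ controlled by standard Gauss-sum estimates; the multiplicativity of $q\mapsto \lambda(q;k)$ together with the trivial bound $\lambda(d^2;k) \ll d^{4+\varepsilon}$ then allows the $d$-sum to be completed to all positive integers, recovering the singular product $\prod_p (1-\lambda(p^2;k)/p^6) H^3$ up to a tail of size $O(H^3/D)$.

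The main obstacle lies in the complementary range $D < d \leq \sqrt{3}H + O(1)$, where naive order reversal produces only the trivial bound $O(H^4/D)$. The plan is to expand the congruence condition via additive characters modulo $d^2$ and complete the incomplete quadratic exponential sum $\sum_{x=1}^{H} e(ax^2/d^2)$ to a Gauss sum by Poisson summation; the average over $a$ modulo $d^2$ then reshapes into Salié sums and their weighted variants. Salié sums, unlike generic Kloosterman sums, admit an \emph{explicit} evaluation in terms of quadratic Gauss sums, and it is precisely this explicit formula that should furnish the square-root cancellation necessary to break past the Zhou--Ding exponent $7/3$. Balancing the resulting analytic bound against $H^3/D$ and the short-range completion error at the cutoff $D \asymp H^{3/4}$ is then expected to yield the claimed estimate $O(H^{9/4+\varepsilon})$.
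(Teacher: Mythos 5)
Your reduction of Theorem~\ref{th2} to Theorem~\ref{th1} is exactly what the paper does (Theorem~\ref{th2} is stated as the $r=2$ specialisation, with no separate proof), and your exponent arithmetic is right: $3/2+3/4=9/4$ and $18/8=9/4$. However, your sketch of the underlying proof of Theorem~\ref{th1} inverts the architecture of the argument, and the treatment you propose for the long range would not work. In the paper, the range $\eta<d\le(3H^2+|k|)^{1/r}$ is \emph{not} attacked with additive characters, Poisson summation, or Sali\'e sums: it is disposed of by the elementary bound $D(H,q,k)\ll q^{-1}H^{3+\varepsilon}$ (Lemma~\ref{lemmaboundD}), which comes from fixing $z$ and the multiple $qt=x^2+y^2+z^2+k$ and invoking the $O(n^{\varepsilon})$ bound on the number of representations of an integer as a sum of two squares. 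Summing this over $d>\eta$ gives $O(H^{3+\varepsilon}\eta^{1-r})$, i.e.\ $O(H^{3+\varepsilon}/\eta)$ when $r=2$. If instead you complete the exponential sums modulo $d^{2}$ for $d$ up to $\sqrt{3}H$ as you propose, the completion error per modulus is governed by $N_3(H,d^2,k)\ll d^{2+\varepsilon}$ (this is the best the Sali\'e-sum bounds give), and $\sum_{d\le\sqrt{3}H}d^{2+\varepsilon}\gg H^{3+\varepsilon}$ swamps the main term; this is precisely why the paper needs the cutoff $\eta$ and a genuinely different (elementary) argument beyond it.

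Conversely, the Sali\'e sums live where you placed the ``standard Gauss-sum estimates'': in the short range $d\le\eta$. There, orthogonality expresses the error in $D(H,d^r,k)=H^3\lambda(d^r;k)/d^{3r}+\cdots$ through the complete sums $\lambda(q;n,m,l,k)$, which by Lemma~\ref{Gauss} reduce to the sums $S(p^{r-\beta};k,-\overline{4}(u^2+v^2+w^2)p^{-2\beta})$ of \eqref{defineSalie}; the Weil-type bound $|S(p^\alpha;a,b)|\le\tau(p^\alpha)p^{\alpha/2}$ (Lemma~\ref{s1}), together with the separate treatment of the degenerate case $S(p^\alpha;a,0)$ (Lemma~\ref{Salie0}) needed when $k=0$, yields $N_i(H,d^r,k)\ll d^{r+\varepsilon}$ and hence the completion error $\eta^{r+1+\varepsilon}$. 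Balancing $\eta^{3}$ against $H^{3}/\eta$ gives $\eta=H^{3/4}$ and the exponent $9/4$ --- so your cutoff is numerically correct, but for the wrong reason. As written, your plan for the range $d>D$ is a step that would fail, and the elementary divisor-function input that actually closes that range is missing from your outline.
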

Theorem \ref{th2} improves upon \eqref{eq1.2} obtained by Zhou and Ding \cite{ZD}.

\bigskip

\section{Notations and some lemmas}

Let $H$ be a sufficiently large positive number.
The letters $k, m, n, l, a, b,c $ stand for integers and $ d, r, h, q, x, y, z, \alpha, \rho$ stand for positive integers.
The letters $\eta,\xi$ denote real numbers and the letter $p$ is reserved for primes.
By $\varepsilon$ we denote an arbitrary small positive number. Throughout this paper, $k$ and $r\ge 2$ are fixed integers, and the implied constants may depend on $k,r$ and $\varepsilon$.

As usual, the functions $\mu(n)$ and $\tau(n)$ represent the M\"obius function and the number of positive divisors of $n$, respectively.
We write $(n_1, \dots, n_u)$ for the greatest common divisor of $n_1, \dots, n_u$.
Let $\| \xi \|$ be the distance from $\xi$ to its nearest integer.
Further $e(t) = \exp \left( 2 \pi i t \right) $ and $e_q(t) = e(t/q)$.
For any $q$ and $x$ such that $(q, x)=1$ we denote by $\overline{x}_q$
the inverse of $x$ modulo $q$. If the modulus $q$ is clear form the context then we write for simplicity $\overline{x}$. For any odd $q$ we denote by $\left( \frac{\cdot}{q}\right) $ the Jacobi symbol.

We introduce the Gauss sum
\begin{equation} \label{1.1}
  G(q; n, m) = \sum_{1 \le x \le q} e_q \left( n x^2 + m x\right) , \quad
  G(q; n) = \sum_{1 \le x \le q} e_q \left( n x^2 \right).
\end{equation}

First, we introduce some basic properties of the Gauss sum.

\begin{lemma} \label{Gauss}
For the Gauss sum we have

(i)
If $(q, n)=d$ then
\[
  G(q; n, m) = \begin{cases}
      d \, G (q/d; \, n/d, \, m/d) & \text{if} \quad d \mid m , \\
      0 & \text{if} \quad d \nmid m .
        \end{cases}
\]

(ii)
If $(q, 2n) = 1 $ then
\[
  G(q; n, m) = e_q \left( - \overline{(4n)} \, m^2 \right) \,
  \left( \frac{n}{q} \right) \, G(q; 1) .
\]

(iii)
If $(q, 2)=1$ then
\[
    G^2(q; 1) = (-1)^{\frac{q-1}{2}} \, q .
\]
\end{lemma}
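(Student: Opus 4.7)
The plan is to establish the three parts in turn by standard Gauss sum manipulations. For (i), I would set $q = dq'$ and $n = dn'$ with $(n', q') = 1$, and parametrize $x \pmod{q}$ by $x = y + q'z$ where $y$ ranges over a complete set of residues mod $q'$ and $z$ over $\{0,\ldots,d-1\}$. A direct expansion shows $e_q(nx^2) = e_{q'}(n'y^2)$, since the cross term $2n q' yz/q = 2 n' yz$ and the term $n q'^2 z^2/q = n'q' z^2$ are both integers, while $e_q(mx) = e_q(my)\,e_d(mz)$. The inner sum $\sum_{z=0}^{d-1} e_d(mz)$ equals $d$ when $d \mid m$ and $0$ otherwise; in the surviving case, setting $m = dm'$ collapses the outer sum to exactly $G(q'; n', m')$, which gives the claim.

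For (ii), since $(q, 2n) = 1$ the inverse $\overline{2n}$ modulo $q$ exists and I would complete the square,
\[
nx^2 + mx \equiv n\bigl(x + \overline{2n}\,m\bigr)^2 - \overline{4n}\,m^2 \pmod{q},
\]
so that translating $x \mapsto x - \overline{2n}\,m$ yields $G(q;n,m) = e_q(-\overline{4n}\,m^2)\,G(q;n)$. The reduction $G(q;n) = \left(\tfrac{n}{q}\right) G(q;1)$ is first established for prime $q = p$ by writing $G(p;n) = \sum_y \#\{x : x^2 \equiv y \pmod p\}\,e_p(ny)$ and using $\#\{x: x^2 \equiv y\} = 1 + \left(\tfrac{y}{p}\right)$ for $p \nmid y$, which isolates the quadratic Gauss sum with a Legendre symbol factor. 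The case of arbitrary odd $q$ then follows from the Chinese remainder theorem, which makes $G(\,\cdot\,;\,\cdot\,)$ multiplicative in the modulus up to an appropriate change of variable by inverses, combined with the multiplicativity of the Jacobi symbol.

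For (iii), the identity $G^2(q;1) = (-1)^{(q-1)/2}q$ for odd $q$ is the classical quadratic Gauss sum evaluation. I would reduce by the Chinese remainder theorem to prime powers $p^a$ with $p$ odd; the case $a \geq 2$ can be handled by splitting $x = u + p^{a-1}v$ and carrying out a telescoping argument that expresses $G(p^a;1)$ in terms of $G(p^{a-2};1)$, while the base case $a = 1$ amounts to pinning down $G(p;1) = \sqrt{p}$ or $i\sqrt{p}$ according to $p \pmod 4$. Reassembling the local factors via CRT then yields the stated product formula.

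The main obstacle is the sign determination in (iii): the evaluation $G(p;1)^2 = (-1)^{(p-1)/2}p$ is straightforward (it follows from Parseval or by pairing $x$ with $-x$), but fixing the exact sign of $G(p;1)$ itself is the deepest classical point, going back to Gauss, and is typically either handled by Schur's eigenvalue method or simply quoted from a standard reference. By contrast, parts (i) and (ii) are essentially bookkeeping once the right change of variable or completion of the square is in place.
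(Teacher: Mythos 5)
Your outline is correct, and it is worth noting at the outset that the paper itself offers no proof of this lemma: it is stated as a collection of classical facts about Gauss sums (they appear, for instance, as Lemma~1 in Tolev's paper cited in the bibliography), so there is no ``paper proof'' to diverge from. Your treatment of (i) by the substitution $x=y+q'z$, of (ii) by completing the square via $nx^2+mx\equiv n(x+\overline{2n}\,m)^2-\overline{4n}\,m^2 \pmod q$ followed by the reduction $G(q;n)=\left(\frac{n}{q}\right)G(q;1)$ through the prime case and CRT, and of (iii) by CRT plus the telescoping $G(p^a;1)=p\,G(p^{a-2};1)$, is the standard and fully workable route. One small correction of emphasis: the ``main obstacle'' you flag --- determining the sign of $G(p;1)$ itself --- is not actually needed anywhere in the lemma. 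Part (ii) leaves $G(q;1)$ unevaluated, and part (iii) asks only for $G^2(q;1)$, which, as you yourself observe, follows from $|G(p;1)|^2=p$ together with $\overline{G(p;1)}=G(p;-1)=\left(\frac{-1}{p}\right)G(p;1)$; Gauss's sign determination never enters. So your argument is complete as it stands, without needing to quote that deeper result.
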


We introduce the Sali\'{e} sum
\begin{equation} \label{defineSalie}
  S(c; a, b) = \sum_{\substack{ 1 \le x \le c \\ (x, c) = 1 }}\left(\frac{x}{c}\right) e_c \left( a x + b \overline{x} \right) .
\end{equation}
It is easy to see that
\begin{equation} \label{sym}
  S(c; a, b) = S(c;b,a) .
\end{equation}
The following result comes from Corollary 4.10 in \cite{IB}.

\begin{lemma} \label{s1}
Let $p$ be an odd prime. Let $\alpha$  be a positive integer. If $p\nmid a$ (or $p\nmid b$), then
\begin{equation} \label{2.1}
 | S(p^\alpha;\, a,\, b)|\leq \tau(p^\alpha)p^{\frac{\alpha}{2}}.
\end{equation}
\end{lemma}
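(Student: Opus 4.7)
My plan is to derive the stated bound from an explicit closed-form evaluation of $S(p^\alpha;a,b)$ as a short combination of Gauss sums, after which Lemma~\ref{Gauss} produces the estimate immediately. This closed-form route is the feature that distinguishes Sali\'e sums from Kloosterman sums, for which one must appeal to the deeper Weil bound.

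By the symmetry \eqref{sym} I may assume $p\nmid b$. If additionally $p\mid a$, the linear phase $e_{p^\alpha}(ax)$ has period strictly smaller than $p^\alpha$, so partitioning $x$ by congruence modulo a suitable power of $p$ expresses $S(p^\alpha;a,b)$ as a bounded combination of Gauss sums to a lower modulus, each of absolute value at most $p^{\alpha/2}$ by Lemma~\ref{Gauss}. In the main case $p\nmid ab$, the algebraic identity
$$ax+b\overline{x}\;\equiv\;2c+\overline{a}\,\overline{x}\,(ax-c)^2\pmod{p^\alpha},$$
valid whenever $c^2\equiv ab\pmod{p^\alpha}$ (by expanding and using $\overline{a}c^2\equiv b$), is the crux. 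Hensel's lemma produces such a $c$ exactly when $ab$ is a quadratic residue modulo $p$, and then gives two solutions $\pm c_0$. Substituting $z\equiv ax-c\pmod{p^\alpha}$ recasts the contribution from each $c$ as the phase $e_{p^\alpha}(2c)$ times a quadratic Gauss sum of modulus $p^\alpha$, which Lemma~\ref{Gauss}(ii)--(iii) bounds by $p^{\alpha/2}$. If instead $ab$ is a non-residue modulo $p$, the change of variable $x\mapsto b\overline{a}\,\overline{x}$ yields $S(p^\alpha;a,b)=\left(\frac{ab}{p}\right)^{\alpha}S(p^\alpha;a,b)$, which forces $S(p^\alpha;a,b)=0$ for odd $\alpha$; for even $\alpha$ the Jacobi symbol is trivial on units and the sum becomes a Kloosterman sum to prime power modulus, again evaluable in closed form via $p$-adic stationary phase.

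Summing over the at most two values $\pm c_0$ gives $|S(p^\alpha;a,b)|\le 2\,p^{\alpha/2}\le \tau(p^\alpha)\,p^{\alpha/2}$, since $\tau(p^\alpha)=\alpha+1\ge 2$. The main obstacle I anticipate is the careful bookkeeping of the Jacobi character $\left(\frac{x}{p^\alpha}\right)=\left(\frac{x}{p}\right)^\alpha$ through the substitution $z\equiv ax-c\pmod{p^\alpha}$: for even $\alpha$ the character is trivial on the units and the reduction to a Gauss sum is painless, but for odd $\alpha$ an auxiliary quadratic character emerges from the change of variable and must be absorbed into $G(p^\alpha;n,m)$ via Lemma~\ref{Gauss}(ii) before (iii) is applied.
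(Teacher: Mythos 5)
The paper offers no proof of this lemma at all --- it is quoted directly from Corollary 4.10 of \cite{IB} --- so any self-contained argument is necessarily a different route. Your high-level plan (explicit evaluation, at most two critical points $\pm c_0$, each contributing $p^{\alpha/2}$, and $2\le\tau(p^\alpha)$) is indeed the classical one, and your identity $ax+b\overline{x}\equiv 2c+\overline{a}\,\overline{x}\,(ax-c)^2$ with $c^2\equiv ab$ checks out. But the central step is wrong as written. The substitution $z\equiv ax-c$ does \emph{not} turn the sum into a quadratic Gauss sum of modulus $p^\alpha$: since $\overline{x}=a\,\overline{(z+c)}$, the phase becomes $2c+\overline{(z+c)}\,z^2$, which still depends on $z$ through the inverse and is not a quadratic polynomial. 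Moreover, the sum over $x$ is never actually partitioned into ``contributions from each $c$'' --- the identity holds for every $x$ and for both square roots simultaneously --- so the quantity you propose to bound by $p^{\alpha/2}$ is not defined.

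The correct elementary mechanism for $\alpha\ge 2$ is $p$-adic stationary phase: write $x=y+p^{\lceil\alpha/2\rceil}s$; the sum over $s$ vanishes unless $a\equiv b\overline{y}^{\,2}\pmod{p^{\lfloor\alpha/2\rfloor}}$, i.e.\ $ay\equiv\pm c_0$; on that locus your identity shows the phase equals $2c$ exactly when $\alpha$ is even, and equals $2c$ plus a quadratic form in the residual variable when $\alpha$ is odd, so the Gauss sum that appears has modulus $p$, not $p^\alpha$, and only after the localization step you never perform. The same computation, not vague ``partitioning,'' is what kills the subcases $p\mid ab$ and $\left(\frac{ab}{p}\right)=-1$ with $\alpha$ even (no critical points, so the sum vanishes); invoking ``$p$-adic stationary phase'' as a black box there is circular, since it is the engine the whole proof needs. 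Finally, $\alpha=1$ is genuinely not covered: there is no room for the splitting $x=y+p^{\lceil\alpha/2\rceil}s$, and one needs either Sali\'e's actual prime-modulus evaluation $S(p;a,b)=\varepsilon_p\sqrt{p}\,\bigl(\tfrac{a}{p}\bigr)\sum_{v^2\equiv ab\,(p)}e_p(2v)$ (proved, for instance, by comparing Fourier transforms in the $b$-variable) or the Weil bound. As it stands the proposal asserts the right answer with the right shape but does not prove it; either carry out the stationary-phase computation for $\alpha\ge2$ and supply the $\alpha=1$ evaluation separately, or cite Corollary 4.10 of \cite{IB} as the paper does.
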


We define
\begin{equation} \label{1.2}
  T(c; a,\rho) = \sum_{\substack{ 1 \le x \le c \\ (x, c) = 1 }}\left(\frac{x}{c}\right)^{\rho} e_c \left( a x \right) .
\end{equation}

\begin{lemma} \label{s3}
Let $p$ be an odd prime. Let $\alpha$  and $\rho$ be  positive integers. Suppose that $p\nmid a$. One has
\[
 T(p^{\alpha};\, a,\, \rho)= 0\quad \textrm{ for } \ \alpha\geq2,
\]
and
\[
 |T(p;\, a,\, \rho)|\leq p^{\frac{1}{2}}.
\]
\end{lemma}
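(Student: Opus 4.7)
The plan is to reduce $T(p^\alpha;a,\rho)$ to a Ramanujan sum or a quadratic Gauss sum according to the parities of $\rho$ and $\alpha$. The key observation is that for $(x,p)=1$ the Jacobi symbol satisfies $\left(\frac{x}{p^\alpha}\right)^\rho = \left(\frac{x}{p}\right)^{\alpha\rho}$, which is identically $1$ unless both $\alpha$ and $\rho$ are odd, in which case it equals $\left(\frac{x}{p}\right)$. This splits the analysis into two cases.

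First, suppose $\alpha\rho$ is even. Then $T(p^\alpha;a,\rho) = \sum_{1\le x\le p^\alpha,\,(x,p)=1} e_{p^\alpha}(ax)$ is exactly the Ramanujan sum $c_{p^\alpha}(a)$. Since $p\nmid a$, we have $(a,p^\alpha)=1$, so $c_{p^\alpha}(a)=\mu(p^\alpha)$. This vanishes whenever $\alpha\ge 2$ and equals $-1$ when $\alpha=1$, giving both desired bounds in this case.

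Second, suppose $\alpha\rho$ is odd. For $\alpha=1$ we obtain the standard quadratic Gauss sum $\sum_{x=1}^{p-1}\left(\frac{x}{p}\right)e_p(ax)$, which has modulus exactly $\sqrt{p}$ (one extracts $(a/p)$ by the substitution $x\mapsto \overline{a}x$ and then applies Lemma \ref{Gauss}(iii)). For $\alpha\ge 3$ odd, I would write each $x$ with $(x,p)=1$ uniquely as $x=y+pz$ with $1\le y\le p$, $(y,p)=1$, and $0\le z < p^{\alpha-1}$. Since $\left(\frac{x}{p}\right)=\left(\frac{y}{p}\right)$, the sum factorises as
\[
T(p^\alpha;a,\rho) = \sum_{\substack{1\le y\le p\\(y,p)=1}}\left(\frac{y}{p}\right)e_{p^\alpha}(ay)\ \sum_{z=0}^{p^{\alpha-1}-1}e_{p^{\alpha-1}}(az),
\]
and the inner sum vanishes because $p\nmid a$ and $\alpha-1\ge 1$, so $p^{\alpha-1}\nmid a$.

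There is no real obstacle here; the argument is a routine Gauss/Ramanujan-sum computation once the Jacobi symbol is collapsed using the parity of $\alpha\rho$. The only place a small care is needed is writing the $x=y+pz$ decomposition in the odd/odd case with $\alpha\ge 3$, to ensure the Jacobi symbol depends only on $y$ and the $z$-sum becomes a full additive character sum modulo $p^{\alpha-1}$.
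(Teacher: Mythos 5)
Your proof is correct and follows essentially the same route as the paper: for $\alpha=1$ the sum is identified as a Ramanujan sum or a quadratic Gauss sum according to the parity of the exponent, and for $\alpha\geq 2$ one writes $x$ as (residue mod $p$) plus $p$ times a quotient, notes that the Jacobi symbol depends only on the residue, and kills the sum via the full additive character sum $\sum_{z}e_{p^{\alpha-1}}(az)=0$ since $p\nmid a$. Your preliminary collapse of $\left(\tfrac{x}{p^\alpha}\right)^\rho$ to $\left(\tfrac{x}{p}\right)^{\alpha\rho}$ and the explicit evaluation $c_{p^\alpha}(a)=\mu(p^\alpha)$ are just slightly more explicit bookkeeping of the same argument.
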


\begin{proof}
We first consider the case $\alpha=1$. Note that $T(p; a;\rho)$ is either the Gauss sum or the Ramanujan sum, and we have
$|T(p;\, a,\, \rho)|\leq p^{\frac{1}{2}}$.

Now we consider the case $\alpha\geq 2$. We write $x=yp+z$ to deduce that
\begin{equation*}
\begin{aligned}T(p^{\alpha};\, a,\, \rho)
&=\sum_{0\leq y\leq p^{\alpha-1}-1}\sum_{0 \leq z \leq p-1}\left(\frac{z}{p}\right)^{\rho}e_{p^{\alpha}}(apy+az)\\
&=\sum_{0 \leq z \leq p-1}\left(\frac{z}{p}\right)^{\rho}e_{p^{\alpha}}(az)\sum_{0\leq y\leq p^{\alpha-1}-1}e_{p^{\alpha-1}}(ay)=0.
\end{aligned}
\end{equation*}
This completes the proof.
\end{proof}

\begin{lemma} \label{Salie0}
Let $p$ be an odd prime and let $\alpha$ be a positive integer. One has
\begin{align}\label{boundS}
  |S(p^\alpha; a,0)| \le p^{\frac{\alpha}{2}} \, (p^\alpha, a)^{\frac{1}{2}} .
\end{align}
\end{lemma}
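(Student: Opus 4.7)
The plan is to exploit the fact that when $b=0$ the Sali\'{e} sum contains no inverse modulo $p^\alpha$, so it degenerates into either a Ramanujan sum (when $\alpha$ is even, since the Jacobi symbol $\left(\frac{x}{p^\alpha}\right)=\left(\frac{x}{p}\right)^\alpha$ becomes trivial) or a quadratic Gauss sum (when $\alpha$ is odd). First I would set $d=(p^\alpha,a)=p^\beta$ with $0\le\beta\le\alpha$; if $\beta<\alpha$ I factor $a=p^\beta a'$ with $(a',p)=1$. The edge case $\beta=\alpha$ (i.e., $p^\alpha\mid a$) is handled immediately: the exponential becomes $1$ and the sum reduces to $\sum_{(x,p)=1}\left(\frac{x}{p}\right)^\alpha$, which equals $\phi(p^\alpha)$ for $\alpha$ even and $0$ for $\alpha$ odd; both are at most $p^\alpha=p^{\alpha/2}(p^\alpha,a)^{1/2}$.

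For the main case $\beta<\alpha$, I would substitute $x=u+vp^{\alpha-\beta}$ with $1\le u\le p^{\alpha-\beta}$ and $0\le v\le p^\beta-1$. Since $(x,p)=(u,p)$, $\left(\frac{x}{p}\right)=\left(\frac{u}{p}\right)$, and $e_{p^\alpha}(ax)=e_{p^{\alpha-\beta}}(a'u)$ carries no $v$-dependence, the trivial $v$-sum yields
\begin{equation*}
S(p^\alpha;a,0) \;=\; p^\beta\sum_{\substack{1\le u\le p^{\alpha-\beta}\\(u,p)=1}}\left(\frac{u}{p}\right)^\alpha e_{p^{\alpha-\beta}}(a'u).
\end{equation*}
If $\alpha$ is even, the character disappears and the remaining sum is the Ramanujan sum $c_{p^{\alpha-\beta}}(a')$; since $(a',p)=1$ this vanishes when $\alpha-\beta\ge 2$ and equals $-1$ when $\alpha-\beta=1$. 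In either subcase $|S(p^\alpha;a,0)|\le p^\beta\le p^{(\alpha+\beta)/2}=p^{\alpha/2}(p^\alpha,a)^{1/2}$.

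If $\alpha$ is odd I would further split $u=r+ps$ with $r\in\{1,\dots,p-1\}$ and $0\le s\le p^{\alpha-\beta-1}-1$. The $s$-sum becomes the geometric sum $\sum_{s}e_{p^{\alpha-\beta-1}}(a's)$, which vanishes for $\alpha-\beta\ge 2$ (using $(a',p)=1$), forcing $S=0$ in that range. Only the subcase $\alpha-\beta=1$ survives; there the inner sum is the classical quadratic Gauss sum $\left(\frac{a'}{p}\right)g_p$ with $|g_p|=\sqrt{p}$, giving $|S|=p^{\beta+1/2}=p^{\alpha-1/2}=p^{\alpha/2}(p^\alpha,a)^{1/2}$, matching the target with equality. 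I do not anticipate a substantial obstacle; the only subtle point is that Lemma \ref{s3} cannot be quoted directly in the odd case when $\alpha-\beta$ is even (the parity of $(\alpha-\beta)\rho$ and $\alpha$ cannot be matched), but this is precisely the range in which the direct splitting forces the sum to vanish.
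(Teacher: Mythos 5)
Your proof is correct and follows essentially the same route as the paper: both dispose of the trivial case $p^\alpha\mid a$, extract $p^\beta=(p^\alpha,a)$ via the substitution $x=z+yp^{\alpha-\beta}$, and reduce to a complete twisted exponential sum modulo $p^{\alpha-\beta}$ that vanishes unless $\alpha-\beta\le 1$. The only difference is that the paper quotes its Lemma \ref{s3} for this last step whereas you evaluate the reduced sum directly according to the parity of $\alpha$ (Ramanujan sum versus classical quadratic Gauss sum); your remark that Lemma \ref{s3} cannot be cited verbatim when $\alpha$ is odd and $\alpha-\beta$ is even is a fair observation about a notational mismatch in the exponent of the Legendre symbol, although the argument proving that lemma does not depend on the exponent and covers that case as well.
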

\begin{proof}Note that \eqref{boundS} holds trivially when $p^\alpha|a$. We only need to consider $p^\alpha\nmid a$. We assume that $p^\beta\|a$ and $0\leq \beta\leq \alpha-1$. Then we write $a=p^\beta a'$ with $p\nmid a'$. We have
$$S(p^\alpha;\, a,\, 0)=\sum_{\substack{1 \leq x \leq p^{\alpha}\\(x,p)=1}}\left(\frac{x}{p}\right)^{\alpha}e_{p^{\alpha-\beta}}(a'x).$$
We write $x=yp^{\alpha-\beta}+z$ to deduce that
\begin{equation*}
\begin{aligned}
 S(p^\alpha;\, a,\, 0)
&=\sum_{0\leq y\leq p^{\beta}-1}\sum_{\substack{0 \leq z \leq p^{\alpha-\beta}-1\\(z,p)=1}}\left(\frac{z}{p}\right)^{\alpha}e_{p^{\alpha-\beta}}(a'z)\\
&=p^{\beta}\sum_{\substack{0 \leq z \leq p^{\alpha-\beta}-1\\(z,p)=1}}\left(\frac{z}{p}\right)^{\alpha}e_{p^{\alpha-\beta}}(a'z)
\\&=p^{\beta}T(p^{\alpha-\beta};\, a',\, \alpha).
\end{aligned}
\end{equation*}
Since $p\nmid a'$, we conclude from Lemma \ref{s3} that
\begin{equation*}
\begin{aligned}
 |S(p^\alpha;\, a,\, 0)|\le p^{\beta+\frac{\alpha-\beta}{2}}= p^{\frac{\alpha}{2}+\frac{\beta}{2}}=p^{\frac{\alpha}{2}}(p^\alpha,a)^{1/2}.
\end{aligned}
\end{equation*}
This completes the proof.
\end{proof}

Let
\begin{equation} \label{1.3}
  \lambda(q; n, m, l, k) =  \sum_{\substack{ 1 \le x, \, y, \, z \le q \\ x^2 + y^2 + z^2 + k \equiv 0 ({\rm{mod}}\ q) }}
   e_q \left( n x + m y + lz \right) .
\end{equation}
\begin{lemma}[Lemma 2.2 \cite{ZD}]\label{lemmamul}
Suppose that $(q_1,q_2)=1$. One has
$$\lambda(q_1q_2;n,m,l,k)=\lambda(q_1; \overline{q_2}_{q_1}n, \overline{q_2}_{q_1}m,\overline{q_2}_{q_1}l,k) \lambda(q_2; \overline{q_1}_{q_2}n,\overline{q_1}_{q_2}m,\overline{q_1}_{q_2}l,k).$$
In particular, we have
$$\lambda(q_1q_2;k)=\lambda(q_1;k)\lambda(q_2;k).$$
\end{lemma}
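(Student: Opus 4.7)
The plan is to apply the Chinese Remainder Theorem (CRT) twice: once to decompose the congruence defining $\lambda(q_1 q_2; n, m, l, k)$, and once to split the additive character $e_{q_1 q_2}$. Since $(q_1, q_2) = 1$, the CRT furnishes a bijection between residues $x$ modulo $q_1 q_2$ and pairs $(x_1, x_2)$ with $x \equiv x_i \pmod{q_i}$, and analogously for $y$ and $z$. Under this bijection $x^2 + y^2 + z^2 + k \equiv 0 \pmod{q_1 q_2}$ if and only if the same congruence holds modulo $q_1$ and modulo $q_2$ separately.

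For the character, writing $1 = A q_2 + B q_1$ with $A \equiv \overline{q_2}_{q_1} \pmod{q_1}$ and $B \equiv \overline{q_1}_{q_2} \pmod{q_2}$, one obtains the identity $1/(q_1 q_2) \equiv A/q_1 + B/q_2 \pmod{1}$, and hence
\[
e_{q_1 q_2}(nx + my + lz) = e_{q_1}\bigl(\overline{q_2}_{q_1}(n x_1 + m y_1 + l z_1)\bigr) \, e_{q_2}\bigl(\overline{q_1}_{q_2}(n x_2 + m y_2 + l z_2)\bigr),
\]
after using $x \equiv x_i \pmod{q_i}$, etc. Substituting this factorisation into \eqref{1.3} and separating the sum over $(x_1, y_1, z_1) \pmod{q_1}$ from the sum over $(x_2, y_2, z_2) \pmod{q_2}$ yields
\[
\lambda(q_1 q_2; n, m, l, k) = \lambda(q_1; \overline{q_2}_{q_1} n, \overline{q_2}_{q_1} m, \overline{q_2}_{q_1} l, k) \, \lambda(q_2; \overline{q_1}_{q_2} n, \overline{q_1}_{q_2} m, \overline{q_1}_{q_2} l, k),
\]
and the particular case $\lambda(q_1 q_2; k) = \lambda(q_1; k) \lambda(q_2; k)$ follows by taking $n = m = l = 0$, which makes the inverse factors irrelevant.

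There is no essential obstacle in this argument; it is a textbook CRT factorisation, and the only bookkeeping step is confirming that the reduction $A n x \equiv \overline{q_2}_{q_1} n x_1 \pmod{q_1}$ is valid, which is immediate from the definitions of $A$ and of $x_1$.
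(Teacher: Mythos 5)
Your argument is correct. The paper itself gives no proof of this lemma---it is quoted directly as Lemma 2.2 of \cite{ZD}---and the Chinese Remainder Theorem factorisation you describe (splitting the solution set of $x^2+y^2+z^2+k\equiv 0 \pmod{q_1q_2}$ into components modulo $q_1$ and $q_2$, and writing $\tfrac{1}{q_1q_2}\equiv \tfrac{\overline{q_2}_{q_1}}{q_1}+\tfrac{\overline{q_1}_{q_2}}{q_2} \pmod 1$ to factor the character) is exactly the standard argument used in that source, with the special case following from $n=m=l=0$ since $\lambda(q;k)=\lambda(q;0,0,0,k)$.
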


Now we apply Lemma \ref{s1}, Lemma~\ref{Salie0} and Lemma~\ref{lemmamul} to obtain an upper bound of $\lambda(q;n,m,l,k)$.
\begin{lemma}\label{lamda}
Suppose that $ p^{r}\| q$ for all primes $p|q$.

(i) If $k\neq 0$, then we have
\begin{align}\label{boundlam1}\lambda (q;n,m,l,k) \ll q^{1+\varepsilon} (q,n,m,l).\end{align}

(ii) One has (for $k=0$)
\begin{align}\label{boundlam2}\lambda(q; n, m, l, 0) \ll q^{1+\varepsilon}(q, n, m, l)(q, n^2+m^2+l^2)^{1/2}.\end{align}
\end{lemma}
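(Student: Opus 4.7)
By the multiplicativity relation of Lemma~\ref{lemmamul}, both bounds reduce to the prime-power case $q = p^r$. Using orthogonality to detect the congruence,
\[
\lambda(p^r; n, m, l, k) = \frac{1}{p^r} \sum_{t=1}^{p^r} e_{p^r}(tk) \, G(p^r; t, n)\, G(p^r; t, m)\, G(p^r; t, l).
\]
The finitely many exceptional primes---$p = 2$ (where $\overline{4}$ is undefined) and, in part~(i), those dividing $k$---are handled by the trivial estimate $|\lambda(p^r; \cdot)| \le p^{2r}$, absorbed into the implied constant; so I assume throughout that $p$ is odd, and additionally $p \nmid k$ for part~(i).

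Next, I would split the $t$-sum according to the $p$-adic valuation $\beta$ of $t$, writing $t = p^\beta t'$ with $(t', p) = 1$. By Lemma~\ref{Gauss}(i) the summand vanishes unless $p^\beta \mid (n, m, l)$, so I restrict to $\beta \le \gamma_0$ where $p^{\gamma_0} \| (n, m, l)$; setting $n = p^\beta n'$ and similarly for $m, l$, each Gauss sum becomes $p^\beta G(p^{r-\beta}; t', n')$. For $\beta < r$, Lemma~\ref{Gauss}(ii)--(iii) expresses each such factor as a Jacobi-symbol twist of $G(p^{r-\beta}; 1)$ times a quadratic exponential in $t'$; combining the three factors and using that a Jacobi symbol equals its own cube yields
\[
p^{3\beta}\, G(p^{r-\beta}; 1)^3 \left(\frac{t'}{p^{r-\beta}}\right) e_{p^{r-\beta}}\!\left(-\overline{4t'}\, A\right), \qquad A := n'^2 + m'^2 + l'^2.
\]
Since $e_{p^r}(tk) = e_{p^{r-\beta}}(t'k)$, summing over units $t'$ produces precisely the Sali\'{e} sum $S(p^{r-\beta}; k, -\overline{4}A)$ from \eqref{defineSalie}. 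Using $|G(p^{r-\beta}; 1)| = p^{(r-\beta)/2}$ from Lemma~\ref{Gauss}(iii) and dividing by $p^r$, the level-$\beta$ contribution to $|\lambda(p^r; \cdot)|$ is $\ll p^{(r+3\beta)/2}\,|S(p^{r-\beta}; k, -\overline{4}A)|$. The boundary level $\beta = r$ is nonzero only when $p^r \mid (n, m, l)$ and contributes $p^{2r}$, already consistent with the target.

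For part~(i), since $p \nmid k$, Lemma~\ref{s1} gives $|S(p^{r-\beta}; k, -\overline{4}A)| \ll p^{(r-\beta)/2 + \varepsilon}$, so level $\beta$ contributes $\ll p^{r+\beta+\varepsilon}$; summing over $0 \le \beta \le \gamma_0$ yields \eqref{boundlam1}. For part~(ii) the Sali\'{e} sum is $S(p^{r-\beta}; 0, -\overline{4}A)$, which by the symmetry~\eqref{sym} and Lemma~\ref{Salie0} satisfies $|S(p^{r-\beta}; 0, -\overline{4}A)| \le p^{(r-\beta)/2}(p^{r-\beta}, A)^{1/2}$, making the level-$\beta$ contribution $\ll p^{r+\beta}(p^{r-\beta}, A)^{1/2}$. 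Writing $V := v_p(n^2+m^2+l^2) = 2\beta + v_p(A)$ (a quantity independent of $\beta$), a direct verification of the inequality $\beta + \tfrac{1}{2}\min(r-\beta, V - 2\beta) \le \gamma_0 + \tfrac{1}{2}\min(r, V)$ for each $\beta \le \gamma_0$ shows that the level-$\beta$ contribution is dominated by $p^{r+\gamma_0+\varepsilon}(p^r, n^2+m^2+l^2)^{1/2}$, so summation over $\beta$ yields \eqref{boundlam2}.

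The main technical obstacle is this final valuation comparison in part~(ii), which converts the local gcd $(p^{r-\beta}, A)^{1/2}$ arising from Lemma~\ref{Salie0} into the global $(p^r, n^2+m^2+l^2)^{1/2}$ appearing in \eqref{boundlam2}. The argument splits into subcases depending on whether $V - \beta \le r$ and whether $V \le r$, each elementary but requiring some care. The exceptional primes $p = 2$ and $p \mid k$, where the Gauss--Sali\'{e} machinery breaks down, are dealt with by trivial counting absorbed into the implied constant.
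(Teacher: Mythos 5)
Your proposal follows essentially the same route as the paper's proof: reduction to prime powers via Lemma~\ref{lemmamul}, orthogonality and splitting of the dual variable by its $p$-adic valuation $\beta$, Lemma~\ref{Gauss} to produce the Sali\'{e} sum $S\bigl(p^{r-\beta};k,-\overline{4}(n^2+m^2+l^2)p^{-2\beta}\bigr)$, then Lemma~\ref{s1} for part (i) and the symmetry \eqref{sym} together with Lemma~\ref{Salie0} for part (ii), with the exceptional primes $p=2$ and $p\mid k$ absorbed into the implied constant exactly as in the paper. The only cosmetic difference is that your final valuation comparison in (ii) is stated more elaborately than necessary --- the paper simply notes termwise that $\bigl(p^{r-\beta},(n^2+m^2+l^2)/p^{2\beta}\bigr)\le(p^r,n^2+m^2+l^2)$ and sums $p^{r+\beta}$ over $\beta$ --- but both arguments are correct.
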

\begin{proof}
We write $q=p_1^{r}p_2^{r} \dots p_s^{r}$, where $p_{i}(1\le i\le s)$ are distinct primes. By Lemma~\ref{lemmamul}, we have
\begin{equation}\label{lammi}
\lambda(q;n,m,l,k)=\prod_{i=1}^s \lambda\left(p_i^{r}; n\overline{q_i}, m\overline{q_i}, l\overline{q_i}, k\right),
\end{equation}
where $q_i=q/p_{i}^{r}$ and $\overline{q_i}$ stands for the inverse of $q_i$ modulo $p_i^{r}$.

Let $n_i=n\overline{q_i}$, $m_i=m\overline{q_i}$ and $l_i=l\overline{q_i}.$ Note that $(p_i^{r},\overline{q_i})=1$, so we have $(p_i^{r},n_i,m_i,l_i)=(p_i^{r},n,m,l)$ and $(p_i^{r},n_i^2+m_i^2+l_i^2)=(p_i^{r},n^2+m^2+l^2)$. Therefore, we only need to prove
\begin{align}\label{boundlam3}\lambda (p^r;u,v,w,k) \ll p^{r+\varepsilon} (p^r,u,v,w) \ \ \textrm{ for } k\not=0,\end{align}
and
\begin{align}\label{boundlam4}\lambda(p^r; u,v,w, 0) \ll p^{r+\varepsilon}(p^r, u,v,w)(p^r, u^2+v^2+w^2)^{1/2}.\end{align}

One has the trivial bound $|\lambda (p^r;u,v,w,k)|\le p^{3r}$. Since the implied constants were allowed to depend on $r$ and $k$, we assume that
$p$ is odd and we further assume that $p\nmid k$ if $k\not=0$.

We have
\begin{equation*}
\begin{aligned}
\lambda(p^{r}; u, v, w,k) &=p^{-r}\sum_{1 \leq x,y,z \leq p^{r}}e_{p^{r}}(u x+v y+w z)\sum_{1 \leq h \leq p^{r}}e_{p^{r}}(h(x^2+y^2+z^2+k)) \\
&=p^{-r}\sum_{1 \leq h \leq p^{r}}e_{p^{r}}(hk) G(p^{r};h,u) G(p^{r};h,v) G(p^{r};h,w) \\
&=p^{-r}\sum_{0 \leq \beta \leq r} \sum_{\mbox{\tiny$\begin{array}{c} 1 \leq h \leq p^{r} \\ (h,p^{r})=p^{\beta} \end{array}$}}e_{p^{r}}(hk) G(p^{r};h,u) G(p^{r};h,v) G(p^{r};h,w).
\end{aligned}
\end{equation*}
 We write $h=h'p^{\beta}$ with $1 \leq h' \leq p^{r-\beta}$ and $(h', p^{r-\beta})=1$. Then we have
 \begin{equation}\label{lambapr}
\begin{aligned}
&\lambda(p^{r}; u, v, w,k)\\
&=p^{-r}\sum_{0 \leq \beta \leq r} \sum_{\mbox{\tiny$\begin{array}{c} 1 \leq h' \leq p^{r-\beta} \\ (h,p^{r-\beta})=1 \end{array}$}}e_{p^{r-\beta}}(h'k) G(p^{r};h'p^{\beta},u) G(p^{r};h'p^{\beta},v) G(p^{r};h'p^{\beta},w).
\end{aligned}
\end{equation}
 By Lemma~\ref{Gauss} (i), we have
  \begin{equation}\label{lambapr2}
\begin{aligned}&G(p^{r};h,u) G(p^{r};h,v) G(p^{r};h,w)
\\ & =
p^{3\beta}G(p^{r-\beta}; h',up^{-\beta})G(p^{r-\beta}; h', vp^{-\beta}) G(p^{r-\beta}; h', wp^{-\beta})
\end{aligned}
\end{equation}
if  $p^{\beta} \mid (u,v,w)$, and
$G(p^{r};h,u) G(p^{r};h,v) G(p^{r};h,w)=0$ if $p^{\beta} \nmid (u,v,w)$. When $p$ is odd and $p^{\beta} \mid (u,v,w)$, by Lemma \ref{Gauss} (ii), we have
\begin{equation}\label{lambapr3}
\begin{aligned}
&G(p^{r-\beta}; h',up^{-\beta})G(p^{r-\beta}; h', vp^{-\beta}) G(p^{r-\beta}; h', wp^{-\beta}) \\
&=\left(\frac{h'}{p^{r-\beta}}\right)^3 G(p^{r-\beta},1)^3e_{p^{r-\beta}} \left(-\overline{(4h')}_{p^{r-\beta}}(u^2+v^2+w^2)p^{-2\beta}\right).
\end{aligned}
\end{equation}
From \eqref{lambapr}-\eqref{lambapr3}, we obtain
\begin{equation}\label{lambapr4}
\begin{aligned}
\lambda(p^{r}; u, v, w,k) =p^{-r}\sum_{\mbox{\tiny$\begin{array}{c} 0\leq \beta \leq r  \\ p^{\beta} \mid (u,v,w) \end{array}$}} p^{3\beta}G(p^{r-\beta};1)^3
S\left(p^{r-\beta}; k, -\overline{4}\frac{u^2+v^2+w^2}{p^{2\beta}}\right).
\end{aligned}
\end{equation}

We first deal with the case $k\not=0$. As mentioned above, $p$ is odd and $p\nmid k$,
then by Lemma \ref{Gauss} (iii) and Lemma~\ref{s1}, we have
\begin{equation*}
\begin{aligned}
\lambda(p^{r}; u, v, w,k) &\ll p^{-r}\sum_{\mbox{\tiny$\begin{array}{c} 0\leq \beta \leq r  \\ p^{\beta} \mid ({p}^{r},u,v,w) \end{array}$}}p^{3\beta}p^{\frac{3}{2}(r-\beta)}\tau(p^{r-\beta})p^{\frac{1}{2}(r-\beta)}
  \ll  p^{r+\varepsilon}(p^{r},u,v,w).
\end{aligned}
\end{equation*}
This establishes \eqref{boundlam3}. Now we consider the case $k=0$. By  Lemma \ref{Gauss} (iii) and Lemma \ref{Salie0}, we deduce from \eqref{lambapr4} that
\begin{equation*}
\begin{aligned}
\lambda(p^{r}; u, v, w,0) &\ll p^{-r}\sum_{\mbox{\tiny$\begin{array}{c} 0\leq \beta \leq r  \\ p^{\beta} \mid ({p}^{r},u,v,w) \end{array}$}}p^{3\beta}p^{\frac{3}{2}(r-\beta)}p^{\frac{1}{2}(r-\beta)}\left(p^{r-\beta},\frac{u^2+v^2+w^2}{p^{2\beta}}\right)^{1/2}
\\ &\ll p^{r+\varepsilon}(p^r, u,v,w)(p^r, u^2+v^2+w^2)^{1/2}.
  \end{aligned}
\end{equation*}
This establishes \eqref{boundlam4}. The proof of the lemma is complete.
\end{proof}

\begin{lemma}[Lemma 2.5 \cite{ZD}]\label{uvw}
Let $Q\geq2$. Let $q=d^{r}$ with $d$ odd and square-free.
We define
\begin{align*}U_1(Q,q)=&\sum_{1\leq n \leq Q}\frac{|\lambda(q;n,0,0,k)|}{n},
\\ U_2(Q,q)=&\sum_{1\leq n,m \leq Q}\frac{|\lambda(q;n,m,0,k)|}{nm},
\\ U_3(Q,q)=&\sum_{1\leq n,m,l \leq Q}\frac{|\lambda(q;n,m,l,k)|}{nml}.
\end{align*}
Suppose that $k \neq 0$. For $1\le i\le 3$, we have
$$U_i(Q,q) \ll q^{1+\varepsilon}Q^{\varepsilon}.$$
\end{lemma}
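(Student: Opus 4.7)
The plan is to reduce each $U_i$ directly to a divisor sum via the pointwise bound on $\lambda$ from Lemma~\ref{lamda}. First I would verify that Lemma~\ref{lamda}(i) applies to $q = d^r$ with $d$ odd and square-free: for every prime $p \mid q$, we have $p \mid d$ and $p \| d$ (by square-freeness), hence $p^r \| q$. Since $k \neq 0$, this yields
\[
|\lambda(q;n,m,l,k)| \ll q^{1+\varepsilon}(q,n,m,l),
\]
and in the degenerate cases $(q,n,m,0) = (q,n,m)$ and $(q,n,0,0) = (q,n)$.

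Plugging this into the definitions, it suffices to bound three elementary divisor-weighted sums. For $U_3$, grouping terms by the value $e = (q,n,m,l)$, which necessarily divides $q$, and writing $n = en'$, $m = em'$, $l = el'$:
\[
\sum_{1\le n,m,l \le Q}\frac{(q,n,m,l)}{nml}
\;\le\; \sum_{e \mid q} e \cdot \frac{1}{e^3}\Bigl(\sum_{n' \le Q/e}\tfrac{1}{n'}\Bigr)^{3}
\;\ll\; (\log Q)^3 \sum_{e \mid q}\frac{1}{e^2}
\;\ll\; Q^{\varepsilon}.
\]
For $U_2$, the analogous grouping by $e = (q,n,m)$ gives
\[
\sum_{1\le n,m \le Q}\frac{(q,n,m)}{nm} \;\ll\; (\log Q)^2 \sum_{e \mid q}\frac{1}{e} \;\ll\; q^{\varepsilon}Q^{\varepsilon},
\]
using $\sum_{e\mid q} 1/e = \sigma(q)/q \ll q^{\varepsilon}$. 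For $U_1$, grouping by $e = (q,n)$ similarly yields
\[
\sum_{1 \le n \le Q}\frac{(q,n)}{n} \;\ll\; (\log Q)\,\tau(q) \;\ll\; q^{\varepsilon}Q^{\varepsilon}.
\]
Multiplying each of these by the factor $q^{1+\varepsilon}$ produced by Lemma~\ref{lamda}(i) gives the claimed bound $U_i(Q,q) \ll q^{1+\varepsilon}Q^{\varepsilon}$ in all three cases.

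There is no genuine obstacle here: the content lies entirely in Lemma~\ref{lamda}(i), and the present lemma is essentially a bookkeeping consequence of it. The only thing to watch is that the exponent of $e$ in the denominator of the grouped sum is large enough (namely $\ge 1$) to absorb the weight $e$ coming from the gcd, which is precisely why the $k \neq 0$ bound (with a single gcd factor, no additional $(q,n^2+m^2+l^2)^{1/2}$) suffices; the case $k=0$ would require a different argument, which is why the lemma is restricted to $k \neq 0$.
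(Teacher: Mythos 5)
Your proof is correct. Note that the paper itself does not prove this lemma --- it is quoted verbatim from Zhou--Ding \cite{ZD} --- but your argument is the standard one underlying it: the hypothesis that $d$ is square-free guarantees $p^r \,\|\, q$ for every $p \mid q$, so Lemma~\ref{lamda}(i) gives the pointwise bound $|\lambda(q;n,m,l,k)| \ll q^{1+\varepsilon}(q,n,m,l)$, and the remaining gcd-weighted harmonic sums are handled exactly as you do, by grouping on $e = (q,n,m,l) \mid q$. Your closing remark correctly identifies why the statement is restricted to $k \neq 0$: for $k=0$ the extra factor $(q,n^2+m^2+l^2)^{1/2}$ in Lemma~\ref{lamda}(ii) is genuinely larger, which is why the paper treats that case separately in the proof of Theorem~\ref{th1}.
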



\begin{lemma}[Lemma 4.7 \cite{Nath}]\label{eta}
 For any real number $\xi$ and all integers $N_{1},\, N_{2}$ with $N_{1}<N_{2}$,
$$\sum_{n=N_{1}+1}^{N_{2}}e(\xi n)\ll min\{N_{2}-N_{1}, \|\xi\|^{-1}\}.$$
\end{lemma}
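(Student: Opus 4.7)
My plan is to use the classical Dirichlet--Vinogradov geometric series estimate, establishing the two bounds composing the minimum separately. The bound $N_2 - N_1$ is immediate from the triangle inequality, since $|e(\xi n)| = 1$ and the sum contains $N_2 - N_1$ terms; this also handles the case $\xi \in \mathbb{Z}$, in which $\|\xi\|^{-1}$ is vacuous since $\|\xi\| = 0$. For the remainder of the argument I would assume $\xi \notin \mathbb{Z}$.

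In that case the sum is a finite geometric progression with common ratio $e(\xi) \ne 1$, which I would evaluate in closed form as
\[
\sum_{n=N_1+1}^{N_2} e(\xi n) = e(\xi(N_1+1)) \cdot \frac{e(\xi(N_2-N_1)) - 1}{e(\xi) - 1}.
\]
Bounding the numerator by $2$ via the triangle inequality and writing $|e(\xi) - 1| = 2|\sin(\pi\xi)|$, the problem reduces to a uniform lower bound on $|\sin(\pi\xi)|$ in terms of $\|\xi\|$.

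This is furnished by the elementary inequality $|\sin(\pi\xi)| \ge 2\|\xi\|$, which follows from the $1$-periodicity of $|\sin(\pi\xi)|$ (so it suffices to consider $|\xi| \le 1/2$, where $\|\xi\| = |\xi|$) combined with the concavity bound $\sin(\pi t) \ge 2t$ on $[0, 1/2]$. Plugging in gives $|{\rm sum}| \le 1/(2\|\xi\|) \ll \|\xi\|^{-1}$, and taking the minimum with the trivial bound completes the proof. There is no real obstacle here; this is a standard textbook estimate requiring no arithmetic input, and I expect the authors will invoke it in the subsequent sections to handle incomplete exponential sums arising from Fourier truncation of the indicator of $r$-freeness.
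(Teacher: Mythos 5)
Your proof is correct and is the standard argument: the paper gives no proof of this lemma at all, simply citing it as Lemma~4.7 of Nathanson \cite{Nath}, and the geometric-series evaluation combined with $|e(\xi)-1|=2|\sin(\pi\xi)|\ge 4\|\xi\|$ is precisely the textbook proof being invoked. Nothing further is needed.
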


Now we introduce
\begin{align}N_1(H,q,k)=&\frac{1}{q}\sum_{1\leq t\leq q-1}\lambda(q;-t,0,0,k)\sum_{1\leq h\leq H}e_{q}(ht),\label{defineN1}
\\ N_2(H,q,k)=&\frac{1}{q^2}\sum_{1\leq t_{1}, t_{2} \leq q-1}\lambda(q;-t_1,-t_2,0,k)\prod_{i=1}^{2}\Big(\sum_{1\leq h_i\leq H}e_{q}(h_{i}t_{i})\Big),\label{defineN2}
\\ N_3(H,q,k)=& \frac{1}{q^3}\sum_{1\leq t_{1}, t_{2}, t_{3} \leq q-1}\lambda(q;-t_1,-t_2,-t_3,k)\prod_{i=1}^{3}\Big(\sum_{1\leq h_i\leq H}e_{q}(h_{i}t_{i})\Big).\label{defineN3}\end{align}

\begin{lemma}\label{N}
Let $H \geq 2$. Let $d$ be odd and square-free. Suppose that $d^{r} \ll H^{2}$. If $k \neq 0$, then for $1\le i\le 3$ we have
$$N_i(H,d^{r},k) \ll d^{r+\varepsilon}.$$
\end{lemma}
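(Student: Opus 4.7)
My plan is to reduce each $N_i(H,d^r,k)$ to the divisor-sum $U_i(q,q)$ already controlled by Lemma~\ref{uvw}. Write $q=d^r$. Three ingredients will combine cleanly: (a)~Lemma~\ref{eta} for the inner geometric sum $\sum_{1\le h\le H}e_q(ht)$; (b)~the symmetry $\lambda(q;n,m,l,k)=\lambda(q;-n,m,l,k)$, which follows from the substitution $x\mapsto-x$ in the defining sum \eqref{1.3} (and similarly in $y$, $z$), since the congruence $x^{2}+y^{2}+z^{2}+k\equiv 0\pmod q$ is invariant; (c)~the fact that $q=d^{r}$ is odd, so $q/2$ is not a residue and each nonzero residue pairs cleanly with its negative.

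Concretely, for $t\in\{1,\ldots,q-1\}$ set $t^{*}:=\min(t,q-t)$. Lemma~\ref{eta} gives
\[
\Bigl|\sum_{1\le h\le H}e_q(ht)\Bigr|\ll\min\{H,\|t/q\|^{-1}\}\le q/t^{*},
\]
while ingredient (b), together with $-t_j\equiv q-t_j\pmod q$, yields
\[
|\lambda(q;-t_{1},\ldots,-t_{i},0,\ldots,0,k)|=|\lambda(q;t_{1}^{*},\ldots,t_{i}^{*},0,\ldots,0,k)|.
\]
Substituting these into the definitions \eqref{defineN1}--\eqref{defineN3}, the prefactor $q^{-i}$ cancels the product $\prod_{j}(q/t_{j}^{*})$. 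Since $q$ is odd, each $s\in\{1,\ldots,(q-1)/2\}$ arises as $t^{*}$ from exactly the two residues $t\in\{s,q-s\}$, so
\[
|N_i(H,q,k)|\le 2^{i}\!\!\sum_{1\le s_{1},\ldots,s_{i}\le (q-1)/2}\!\!\frac{|\lambda(q;s_{1},\ldots,s_{i},0,\ldots,0,k)|}{s_{1}\cdots s_{i}}\le 2^{i}\,U_{i}(q,q).
\]
Lemma~\ref{uvw} then gives $U_{i}(q,q)\ll q^{1+\varepsilon}=d^{r+\varepsilon}$, as required.

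I foresee no serious obstacle. The crude step $\min(H,q/t^{*})\le q/t^{*}$ throws away the $H$-truncation, but this is harmless: any finer gain in the range $t^{*}\ll q/H$ is already absorbed into the $Q^{\varepsilon}$ factor of Lemma~\ref{uvw}. The only technical care is the sign-symmetrization, which rests on the sign-flip invariance of $\lambda$ and the parity of $q$. The hypothesis $d^{r}\ll H^{2}$ plays no essential role in this lemma itself; it is merely the natural range in which $N_i$ will be applied subsequently.
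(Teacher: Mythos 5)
Your proposal is correct and takes essentially the same route as the paper: bound the inner geometric sums by Lemma~\ref{eta}, reduce $N_i(H,d^r,k)$ to the sums $U_i$ of Lemma~\ref{uvw}, and conclude. You simply make explicit the sign-symmetrization ($\lambda(q;-t,\dots)=\lambda(q;t,\dots)$ and the pairing $t\leftrightarrow q-t$) that the paper leaves implicit when it writes $N_i(H,d^r,k)\ll U_i(\tfrac{d^r-1}{2},d^r)$.
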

\begin{proof}By Lemma~\ref{eta}, we have
$$\sum_{1\leq h\leq H}e_{d^{r}}(ht) \ll \|\frac{t}{d^r}\|^{-1}.$$
Then we obtain
\begin{equation*}
\begin{aligned}
N_i(H,d^{r},k)\ll U_i(\frac{d^r-1}{2},d^r).
\end{aligned}
\end{equation*}
The desired estimate follows from Lemma~\ref{uvw} directly.
\end{proof}

\section{Proof of Theorem \ref{th1}}

Let
$$D(H,q,k)=\sum_{\mbox{\tiny$\begin{array}{c} 1\leq x,y,z \leq H \\ x^2+y^2+z^2+k\equiv 0 ({\rm{mod}}\ q)\end{array}$}}1.$$
\begin{lemma}\label{lemmaboundD}Suppose that $q\ll H^2$. Then we have
\begin{align}\label{boundD}D(H,q,k)\ll q^{-1}H^{3+\varepsilon}.\end{align}
In particular, one has
\begin{align}\label{boundlambdaq}\lambda(q;k)\ll q^{2+\varepsilon}.\end{align}
\end{lemma}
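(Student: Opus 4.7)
The plan is to count the triples $(x,y,z)\in[1,H]^3$ satisfying $q\mid x^2+y^2+z^2+k$ by organising them according to the quotient $j:=(x^2+y^2+z^2+k)/q$. Since $1\le x,y,z\le H$, the value $qj$ lies in the interval $[k+3,\,3H^2+k]$, so the number of admissible integers $j$ is $\ll H^2/q+1$, which is $\ll H^2/q$ under the standing hypothesis $q\ll H^2$.

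For each admissible $j$, the contribution is at most the number $r_3(N)$ of representations of $N:=qj-k\in[1,3H^2]$ as an ordered sum of three integer squares (dropping the positivity constraint only enlarges the count). I would apply the classical reduction $r_3(N)=\sum_{x^2\le N}r_2(N-x^2)$ together with the divisor bound $r_2(m)\le 4\tau(m)\ll m^\varepsilon$ (which comes from the identity $r_2(m)=4\sum_{d\mid m}\chi_{-4}(d)$) to obtain $r_3(N)\ll\sqrt{N}\cdot N^\varepsilon\ll H^{1+\varepsilon}$ whenever $N\le 3H^2$. Multiplying the two estimates gives
\[
D(H,q,k)\ll \frac{H^2}{q}\cdot H^{1+\varepsilon}=q^{-1}H^{3+\varepsilon},
\]
which is \eqref{boundD}.

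The consequence \eqref{boundlambdaq} is immediate on comparing the definitions: $\lambda(q;k)=D(q,q,k)$, and the hypothesis $q\ll H^2$ is trivially satisfied for $H=q$. Applying \eqref{boundD} with $H=q$ then yields $\lambda(q;k)\ll q^{-1}q^{3+\varepsilon}=q^{2+\varepsilon}$. The whole argument is elementary and presents no serious obstacle; the only external input is the divisor bound for the two-square representation function.
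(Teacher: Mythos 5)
Your proof is correct and follows essentially the same route as the paper: the authors likewise fix the quotient $t=(x^2+y^2+z^2+k)/q$ (of which there are $\ll H^2/q$ choices), sum over one variable, and bound the remaining two-square count by the divisor bound $H^{\varepsilon}$, then deduce \eqref{boundlambdaq} via $\lambda(q;k)=D(q,q,k)$. Your reduction $r_3(N)=\sum_{x^2\le N}r_2(N-x^2)$ is just their inner double sum written in classical notation, so there is nothing to change.
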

\begin{proof}The proof is standard and we include the details for completeness. We have
\begin{equation*}
\begin{aligned}
D(H,q,k) =&\sum_{|t| \leq \frac{3H^2+|k|}{q}}\sum_{1\leq z \leq H} \sum_{\mbox{\tiny$\begin{array}{c} 1\leq x,y \leq H \\ x^2+y^2=qt-z^2-k \end{array}$}}1\\ \ll
&\sum_{|t| \leq \frac{3H^2+|k|}{q}}\sum_{1\leq z \leq H}H^{\varepsilon}
\\ \ll &q^{-1}H^{3+\varepsilon}.
\end{aligned}
\end{equation*}
This establishes \eqref{boundD}. Note that $\lambda(q;k)=D(q,q,k)$. The estimate \eqref{boundlambdaq} follows immediately from \eqref{boundD}.
This completes the proof.\end{proof}

Now we represent $R(H,r,k)$ in terms of $D(H,q,k)$.
\begin{lemma}\label{lemma42}Suppose that $1\le \eta\le (3H^2+|k|)^{1/r}$. Then we have
$$R(H,r,k)=\sum_{d\le \eta}\mu(d)D(H,d^r,k)+O(H^{3+\varepsilon}\eta^{1-r}).$$
\end{lemma}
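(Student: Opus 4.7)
The plan is to use the Möbius characterization of $r$-freeness. For every nonzero integer $n$,
$$\mathbf{1}[n \text{ is } r\text{-free}] = \sum_{d:\, d^r \mid n} \mu(d),$$
while $n = 0$ is never $r$-free. Applying this identity with $n = x^2+y^2+z^2+k$ and interchanging the order of summation yields
$$R(H,r,k) = \sum_{d \ge 1} \mu(d)\, D^{*}(H, d^r, k),$$
where $D^{*}(H,q,k)$ is defined exactly like $D(H,q,k)$ except that triples with $x^2+y^2+z^2+k = 0$ are excluded. The exceptional set is empty unless $k\le -3$, and in any case its size depends only on $k$, so $D(H,d^r,k) - D^{*}(H,d^r,k) = O(1)$ uniformly in $d$. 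Moreover $D^{*}(H,d^r,k)$ vanishes once $d^r > 3H^2+|k|$, so the outer sum is effectively finite.

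I then split the sum at $\eta$. For any $d > \eta$ with $D^{*}(H,d^r,k)\neq 0$ we have $d^r \le 3H^2+|k| \ll H^2$, so Lemma~\ref{lemmaboundD} applies and yields $D(H,d^r,k) \ll d^{-r} H^{3+\varepsilon}$. Hence, using $r \ge 2$,
$$\Bigl|\sum_{d > \eta}\mu(d)\,D^{*}(H,d^r,k)\Bigr| \ll H^{3+\varepsilon}\sum_{d > \eta} d^{-r} \ll H^{3+\varepsilon}\eta^{1-r}.$$
In the main range $d\le \eta$, replacing $D^{*}$ by $D$ introduces an extra error of size $\sum_{d\le\eta}O(1) \ll \eta$; since $\eta^r \le 3H^2+|k| \ll H^2$ by hypothesis, this bound is itself $\ll H^{3+\varepsilon}\eta^{1-r}$ and is therefore absorbed. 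Combining the two estimates produces the claimed formula.

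The argument has essentially no deep step: the only mild technicality is that the Möbius identity $\sum_{d^r\mid n}\mu(d) = \mathbf{1}[n \text{ is } r\text{-free}]$ fails at $n=0$, which forces one to set aside the (bounded, $k$-dependent) set of triples with $x^2+y^2+z^2 = -k$ before applying Fubini. Once those triples are accounted for, the lemma is an immediate consequence of the divisor bound for $D(H,q,k)$ supplied by Lemma~\ref{lemmaboundD}.
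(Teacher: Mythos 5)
Your proposal is correct and follows essentially the same route as the paper: the Möbius identity for $r$-freeness, interchange of summation with the zero-value triples set aside as an $O(1)$ correction, and truncation of the $d$-sum at $\eta$ using the bound $D(H,d^r,k)\ll d^{-r}H^{3+\varepsilon}$ from Lemma~\ref{lemmaboundD}. The only cosmetic difference is that the paper collects the $O(1)$ corrections over the full range $d\le (3H^2+|k|)^{1/r}$ into an $O(H^{2/r})$ term, whereas you restrict them to $d\le\eta$ and absorb the resulting $O(\eta)$; both are harmless.
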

\begin{proof}
The starting point is to apply the following identity
\begin{align*}
\sum_{d^r|n}\mu(d)=\begin{cases}1, \ &\textrm{ if } n \textrm{ is } r\textrm{-free},
\\ 0, \ &\textrm{ if } n\not=0 \textrm{ is not }r\textrm{-free}.\end{cases}
\end{align*}
Then we deduce that
\begin{equation*}
\begin{aligned}
R(H,r,k) &= \sum_{\mbox{\tiny$\begin{array}{c} 1 \leq x,y,z \leq H\\x^2+y^2+z^2+k \neq 0 \end{array}$}}
\sum_{\mbox{\tiny$\begin{array}{c} d^r \mid x^2+y^2+z^2+k  \end{array}$}}\mu(d) \\
&=\sum_{1\leq d\leq (3H^2+|k|)^{1/r}}\mu(d)\sum_{\mbox{\tiny$\begin{array}{c} 1\leq x,y,z \leq H \\ x^2+y^2+z^2+k\equiv 0 ({\rm{mod}}\ d^r)\\ x^2+y^2+z^2+k\not=0\end{array}$}}1
.
\end{aligned}
\end{equation*}
Note that
$$\sum_{\mbox{\tiny$\begin{array}{c} 1\leq x,y,z \leq H \\ x^2+y^2+z^2+k\equiv 0 ({\rm{mod}}\ q)\\ x^2+y^2+z^2+k\not=0\end{array}$}}1
=D(H,q,k)+O(1).$$
Now we conclude from above that
\begin{equation*}
\begin{aligned}
R(H,r,k) =&\sum_{1\leq d\leq (3H^2+|k|)^{1/r}}\mu(d)D(H,d^r,k)+\sum_{1\leq d\leq (3H^2+|k|)^{1/r}}O(1)
\\= &\sum_{1\leq d\leq (3H^2+|k|)^{1/r}}\mu(d)D(H,d^r,k)+O(H^{2/r}).
\end{aligned}
\end{equation*}
Splitting the above summation into two parts, we obtain
\begin{equation*}
\begin{aligned}
R(H,r,k)=\sum_{1\leq d\leq \eta}\mu(d)D(H,d^r,k)+\sum_{\eta<d\leq (3H^2+|k|)^{1/r}}\mu(d)D(H,d^r,k)+O(H^{2/r}).
\end{aligned}
\end{equation*}
On applying Lemma \ref{lemmaboundD}, we further obtain
$$R(H,r,k)=\sum_{1\leq d\leq \eta}\mu(d)D(H,d^r,k)+O(H^{3+\varepsilon}\eta^{1-r}).$$
This completes the proof.
\end{proof}

Now we deal with $D(H,q,k)$ for $q\le \eta^r$.
\begin{lemma}\label{lemma43}Let $\lambda(q;k)$ be defined in \eqref{definelambda} and let $N_i(H,q,k)$ be defined in \eqref{defineN1}-\eqref{defineN3}. We have
\begin{equation}\label{Ds}
D(H,q,k)=\frac{H^3}{q^{3}}\lambda(q;k)+3\frac{H^2}{q^{2}}N_1(H,q,k) + 3\frac{H}{q}N_2(H,q,k) +N_3(H,q,k).
\end{equation}
\end{lemma}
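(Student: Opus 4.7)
The plan is to carry out a standard Fourier decomposition of the counting function $D(H,q,k)$ over residue classes modulo $q$, and then expand the resulting triple product.

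First I would partition the box $\{1\le x,y,z\le H\}$ according to residue classes of $(x,y,z)$ modulo $q$. This gives
\[
D(H,q,k)=\sum_{\substack{1\le u,v,w\le q\\ u^2+v^2+w^2+k\equiv 0\,(\mathrm{mod}\,q)}}M(u)M(v)M(w),
\]
where $M(a):=\#\{1\le h\le H:h\equiv a\,(\mathrm{mod}\,q)\}$. Using orthogonality of additive characters $\mathbf{1}_{h\equiv a\,(q)}=\tfrac{1}{q}\sum_{t\,(q)}e_q(t(h-a))$ and isolating the $t\equiv 0$ frequency, I obtain
\[
M(a)=\frac{H}{q}+\frac{1}{q}\sum_{1\le t\le q-1}e_q(-ta)\,E(t),\qquad E(t):=\sum_{1\le h\le H}e_q(th).
\]

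Next I would substitute this decomposition into the triple product $M(u)M(v)M(w)$ and expand into the $2^3=8$ terms, grouping by the number of factors equal to the mean term $H/q$. After exchanging the outer sum over $(u,v,w)$ with the $t$-sums and using the definition \eqref{1.3}, the inner sum over residues collapses to a value of $\lambda(q;\cdot,\cdot,\cdot,k)$: specifically, a pure $H/q$ contribution from the three ``mean'' factors produces $\tfrac{H^3}{q^3}\lambda(q;k)$; each of the three terms with two $H/q$ factors and one character sum produces (by symmetry in the slots) $\tfrac{H^2}{q^2}N_1(H,q,k)$; each of the three terms with one mean factor produces $\tfrac{H}{q}N_2(H,q,k)$; and the fully non-constant term produces $N_3(H,q,k)$. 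Matching with \eqref{defineN1}--\eqref{defineN3} yields exactly \eqref{Ds}.

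There is no real obstacle here: the only bookkeeping point is to confirm the symmetry factors of $3$ in the middle two terms, which is clear since the expressions $\lambda(q;-t,0,0,k)$, $\lambda(q;0,-t,0,k)$, $\lambda(q;0,0,-t,k)$ are all equal by the symmetry of the quadratic form $x^2+y^2+z^2+k$ in its three variables, and similarly for $\lambda(q;-t_1,-t_2,0,k)$ and its permutations. Thus collecting the eight terms gives the stated identity.
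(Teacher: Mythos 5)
Your proposal is correct and follows essentially the same route as the paper: decompose the counts in each residue class via orthogonality of additive characters, isolate the mean term $H/q$, expand the triple product into eight terms, and collect them using the symmetry of the congruence $x^2+y^2+z^2+k\equiv 0 \ (\mathrm{mod}\ q)$ in its three variables to justify the factors of $3$. The paper's proof does exactly this (with the symmetry used implicitly when writing $3L_1(x)$ and $3L_2(x,y)$), so no further comment is needed.
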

\begin{proof}Note that
\begin{equation*}
\begin{aligned}
D(H,q,k)=\sum_{\mbox{\tiny$\begin{array}{c} 1\leq x,y,z \leq q \\ x^{2}+y^{2}+z^{2}+k \equiv 0 ({\rm{mod}}\ q) \end{array}$}}
\sum_{\mbox{\tiny $\begin{array}{c} 1\leq m,n,l \leq H \\ m \equiv x ({\rm{mod}}\ q) \\ n \equiv y ({\rm{mod}}\ q)\\ l \equiv z ({\rm{mod}}\ q)\end{array}$}} 1.
\end{aligned}
\end{equation*}
By orthogonality, we have
\begin{equation*}
\begin{aligned}
\sum_{\mbox{\tiny $\begin{array}{c} 1\leq m \leq H \\ m \equiv x ({\rm{mod}}\ q) \end{array}$}} 1
= & q^{-1}\sum_{1\leq m \leq H} \sum_{1\leq t \leq q}e_{q}((h-x)t)\\
=& q^{-1}\sum_{1\leq t \leq q}e_{q}(-xt)\sum_{1\leq h \leq H}e_{q}(ht)\\
=& Hq^{-1}+q^{-1}\sum_{1\leq t \leq q-1}e_{q}(-xt)\sum_{1\leq h \leq H}e_{q}(ht).
\end{aligned}
\end{equation*}
Now we conclude from above that
\begin{equation}\label{DandL}
\begin{aligned}
D(H,q,k)=\sum_{\mbox{\tiny$\begin{array}{c} 1\leq x,y,z \leq q \\ x^{2}+y^{2}+z^{2}+k \equiv 0({\rm{mod}}\ q) \end{array}$}}\Big(\frac{H^{3}}{q^{3}}+3\frac{H^{2}}{q^{2}}L_{1}(x)
+3\frac{H}{q}L_{2}(x,y)+L_{3}(x,y,z)\Big),
\end{aligned}
\end{equation}
where
$$L_1(x):=L_{1}(x;q,H)=\frac{1}{q}\sum_{1\leq t \leq q-1}e_{q}(-xt)\sum_{1\leq h \leq H}e_{q}(ht),$$
$$L_{2}(x,y):=L_{2}(x,y;q,H)=\frac{1}{q^2}\sum_{1\leq t_{1},t_{2} \leq q-1}e_{q}(-xt_{1}-yt_{2})\prod_{i=1}^{2}(\sum_{1\leq h_{i} \leq H}e_{q}(h_{i}t_{i})),$$
and
$$L_{3}(x,y,z):=L_{3}(x,y,z;q,H)=\frac{1}{q^3}\sum_{1\leq t_{1},t_{2},t_{3} \leq q-1}e_{q}(-xt_{1}-yt_{2}-zt_{3})\prod_{i=1}^{3}(\sum_{1\leq h_{i} \leq H}e_{q}(h_{i}t_{i})).$$
For convience, we write $L_1(x,y,z;q,H)=L_1(x;q,H)$ and $L_2(x,y,z;q,H)=L_2(x,y;q,H)$. By exchanging the order of summations and recalling the definitions of $\lambda(q;n,m,l,k)$ and $N_i(H,q,k)$, we obtain
\begin{equation}\label{LandN}
\begin{aligned}
\sum_{\mbox{\tiny$\begin{array}{c} 1\leq x,y,z \leq q \\ x^{2}+y^{2}+z^{2} \equiv 0 ({\rm{mod}}\ q) \end{array}$}}L_{i}(x,y,z;q,H)=
N_i(H,q,k).
\end{aligned}
\end{equation}
Now \eqref{Ds} follows from \eqref{DandL} and \eqref{LandN}.
\end{proof}

\noindent\textit{Proof of Theorem \ref{th1}.}
We first deal with the case $k\not=0$. By Lemma~\ref{N} and Lemma \ref{lemma43}, we can get
$$D(H,d^r,k)=H^3\frac{\lambda(d^r;k)}{d^{3r}}+O(H^2d^{\varepsilon-r}+Hd^{\varepsilon}+d^{r+\varepsilon}).$$
By Lemma \ref{lemma42}, we find that
$$R(H,r,k)= H^3\sum_{1\leq d \leq \eta}\frac{\mu(d)\lambda(d^r;k)}{d^{3r}}+O(H^{2}+H\eta^{1+\varepsilon} +\eta^{r+1+\varepsilon}+H^{3+\varepsilon}\eta^{1-r}),$$
and then by \eqref{boundlambdaq}, we have
$$R(H,r,k)= H^3\sum_{d=1}^{\infty}\frac{\mu(d)\lambda(d^r;k)}{d^{3r}}+O(H^{2}+H\eta^{1+\varepsilon} +\eta^{r+1+\varepsilon}+H^{3+\varepsilon}\eta^{1-r}).$$
Now we choose $\eta=H^{3/2r}$ to conclude that
$$R(H,r,k)=H^3\sum_{d=1}^{\infty}\frac{\mu(d)\lambda(d^r;k)}{d^{3r}} +O(H^{2}+H^{3/2+3/2r+\varepsilon}).$$
Since $\lambda(q;k)$ is multiplicative as a function of $q$, we obtain (for $k\not=0$) that
$$R(H,r,k)=\prod_p\left(1-\frac{\lambda(p^r;k)}{p^{3r}}\right)H^3 +O(H^{2}+H^{3/2+3/2r+\varepsilon}),$$
and this completes the proof of \eqref{Rk1}.

From now on, we consider the case $k=0$. We introduce
$$T_{1}(H,\eta)=\sum_{1\leq d\leq \eta}\frac{\mu(d)}{d^{2r}}N_1(H,d^r,0),$$
$$T_{2}(H,\eta)=\sum_{1\leq d\leq \eta}\frac{\mu(d)}{d^{r}}N_2(H,d^r,0),$$
and
$$T_{3}(H,\eta)=\sum_{1\leq d\leq \eta}\mu(d)N_3(H,d^r,0).$$
By Lemma \ref{lemma42} and \eqref{Ds}, we have
\begin{equation}\label{RTO}
\begin{aligned}
&R(H,r,0)\\
&=H^3\sum_{1\leq d \leq \eta}\frac{\mu(d)\lambda(d^r;0)}{d^{3r}}+3H^2T_1(H,\eta)+3HT_2(H,\eta)+T_{3}(H,\eta)+O(H^{3+\varepsilon}\eta^{1-r}).
\end{aligned}
\end{equation}
Recalling the definition of $N_1(H,d^r,0)$ in \eqref{defineN1}, we deduce from \eqref{boundlam2} and Lemma~\ref{eta} that
\begin{equation*}
\begin{aligned}
T_{1}(H,\eta)
&\ll \sum_{1\leq d\leq \eta}d^{-2r}\sum_{1 \leq |t| \leq \frac{d^r-1}{2}} \frac{|\lambda(d^r;t,0,0,0)|}{|t|} \\
&\ll \sum_{1\leq d\leq \eta}d^{-r+\varepsilon}\sum_{1 \leq |t| \leq \frac{d^r-1}{2}}  \frac{(d^r,t)(d^r,t^2)^{\frac{1}{2}}}{|t|}.
\end{aligned}
\end{equation*}
Since $(d^r,t^2)^{\frac{1}{2}}\le d^{r/2}$, we have
\begin{equation*}
\begin{aligned}
T_{1}(H,\eta)\ll \sum_{1\leq d\leq \eta}d^{-\frac{r}{2}+\varepsilon}\sum_{1 \leq |t| \leq \frac{d^r-1}{2}}  \frac{(d^r,t)}{t}.
\end{aligned}
\end{equation*}
From the following elementary estimate
$$\sum_{1 \leq t \leq \frac{d^r-1}{2}} \frac{(d^r,t)}{t}\ll d^{\varepsilon},$$
we conclude that (for $r\ge 2$)
\begin{align}\label{boundT1}T_{1}(H,\eta)
\ll \sum_{1\leq d\leq \eta}d^{-\frac{r}{2}+\varepsilon}\ll \eta^{\varepsilon}.
\end{align}
Recalling \eqref{defineN2}, we deduce from \eqref{boundlam2} and Lemma~\ref{eta} that
\begin{equation*}
\begin{aligned}
T_{2}(H,\eta)
& \ll \sum_{1\leq d\leq \eta}d^{-r}\sum_{1 \leq |t_1|,|t_2| \leq \frac{d^r-1}{2}} \frac{|\lambda(d^r;t_{1},t_{2},0,0)|}{|t_{1}t_{2}|}\\
&\ll \eta^\varepsilon\sum_{1\leq d\leq \eta}\sum_{1 \leq |t_1|,|t_2| \leq \frac{d^r-1}{2}} \frac{(d^r,t_{1}^2+t_{2}^2)^{1/2}(d^r,t_{1},t_{2})}{|t_{1}t_{2}|}.
\end{aligned}
\end{equation*}
Since $(d^r,t_{1}^2+t_{2}^2)^{1/2}\leq d^{\frac{r-2}{2}}(d,t_{1}^2+t_{2}^2)\le \eta^{\frac{r-2}{2}}(d,t_{1}^2+t_{2}^2)$ and $(d^r,t_{1},t_{2})\leq (t_{1},t_{2})$, we deduce that
$$T_{2}(H,\eta)
\ll \eta^{\frac{r-2}{2}+\varepsilon}\sum_{1\leq d\leq \eta}\sum_{1 \leq |t_1|,|t_2| \leq \frac{d^r-1}{2}}(d,t_{1}^2+t_{2}^2)\frac{(t_{1},t_{2})}{t_{1}t_{2}}.$$
By exchanging the order of summations, we have
\begin{align}\label{exchangeT2}T_{2}(H,\eta)
\ll \eta^{\frac{r-2}{2}+\varepsilon}\sum_{1 \leq t_1,t_2 \leq \eta^t}\frac{(t_{1},t_{2})}{t_{1}t_{2}}\sum_{1\leq d\leq \eta}(d,t_{1}^2+t_{2}^2).
\end{align}
Now we easily obtain
\begin{align}\label{boundT2}T_{2}(H,\eta)
\ll  \eta^{\frac{r}{2}+\varepsilon}.
\end{align}
We estimate the sum $T_{3}(H,\eta)$ in same way as follows
\begin{equation*}
\begin{aligned}
T_{3}(H,\eta)
 \ll \sum_{1\leq d\leq \eta}d^{r+\varepsilon}\sum_{1 \leq |t_{1}|,|t_{2}|,|t_{3}| \leq \frac{d^r-1}{2}} \frac{(d^r,t_{1}^2+t_{2}^2+t_{3}^2)^{1/2}(d^r,t_{1},t_{2},t_{3})}{|t_{1}t_{2}t_{3}|},
\end{aligned}
\end{equation*}
and by $(d^r,t_{1}^2+t_{2}^2+t_{3}^2)^{1/2}\leq d^{\frac{r-2}{2}}(d,t_{1}^2+t_{2}^2+t_{3}^2)\leq \eta^{\frac{r-2}{2}}(d,t_{1}^2+t_{2}^2+t_{3}^2)$ and $(d^r,t_{1},t_{2},t_{3})\leq (t_{1},t_{2},t_{3})$, we have
$$T_{3}(H,\eta)
\ll \eta^{\frac{3r-2}{2}+\varepsilon}\sum_{1\leq d\leq \eta}\sum_{1 \leq |t_{1}|,|t_{2}|,|t_{3}| \leq \frac{d^r-1}{2}}(d,t_{1}^2+t_{2}^2+t_{3}^2)\frac{(t_{1},t_{2},t_{3})}{t_{1}t_{2}t_{3}}.$$
By a similar argument as in \eqref{exchangeT2}, we finally obtain
\begin{align}\label{boundT3}
T_{3}(H,\eta)\ll \eta^{3r/2+\varepsilon}.\end{align}
Now we combine \eqref{RTO}-\eqref{boundT3} to conclude that
$$R(H,r,0) = H^3\sum_{1\leq d\leq \eta}\frac{\mu(d)\lambda(d^r;0)}{d^{3r}} +O(H^{2+\varepsilon}+H\eta^{\frac{r}{2}+\varepsilon}+\eta^{\frac{3r}{2}+\varepsilon}+H^{3+\varepsilon}\eta^{1-r}).$$
Then by \eqref{boundlambdaq}, we get
$$R(H,r,0) = H^3\sum_{d=1}^\infty\frac{\mu(d)\lambda(d^r;0)}{d^{3r}} +O(H^{2+\varepsilon}+H\eta^{\frac{r}{2}+\varepsilon}+\eta^{\frac{3r}{2}+\varepsilon}+H^{3+\varepsilon}\eta^{1-r}).$$
Since $\lambda(q;0)$ is multiplicative, we obtain by choosing $\eta =H^{6/(5r-2)}$ that
$$R(H,r,0) = \prod_p\left(1-\frac{\lambda(p^r;0)}{p^{3r}}\right)H^3 +O(H^{2+\varepsilon}+H^{9r/(5r-2)+\varepsilon}).$$
This establishes \eqref{R01}. The proof of Theorem \ref{th1} is complete.

\bigskip


\begin{thebibliography}{30}

\bibitem{Brandes}
J. Brandes,
\newblock {\em Twins of the s-free numbers},
\newblock Diploma thesis, Stuttgart, 2009.

\bibitem{Car}
L. Carlitz,
\newblock {\em On a problem in additive arithmetic},
\newblock Quart. J. Math. Oxford Ser. 3 (1932) 273-290.


\bibitem{Est}
T. Estermann,
\newblock {\em Einige s\"{a}tze \"{u}ber quadratfreie zahlen},
\newblock Math. Ann. 105 (1931) 653-662.

\bibitem{HB1}
D. R. Heath-Brown,
\newblock {\em Square-free values of $n^2+1$},
\newblock Acta Arith. 155 (2012) 1-13.

\bibitem{HB2}
D. R. Heath-Brown,
\newblock {\em The square sieve and consecutive square-free numbers},
\newblock Math. Ann. 266 (1984) 251-259.


\bibitem{Iwa}
H. Iwaniec,
\newblock {\em Almost-primes represented by quadratic polynomials},
\newblock Invent. Math. 47 (1978) 171-188.

\bibitem{IB}
H. Iwaniec,
\newblock {\em Topics in Classical Automorphic Forms},
\newblock American Mathematical Society, Providence, 1997.

\bibitem{Mir2}
L. Mirsky,
\newblock {\em On the frequency of pairs of square-free numbers with a given difference},
\newblock Bull. Amer. Math. Soc. 55 (1949) 936-939.

\bibitem{Mir1}
L. Mirsky,
\newblock {\em Note on an asymptotic formula connected with r-free integers},
\newblock Quart. J. Math. Oxford Ser. 18 (1947) 178-182.

\bibitem{Nath}
M. B. Nathanson,
\newblock {\em Addictive Number Theory: The Classical Bases},
\newblock Springer, 1996.

\bibitem{Reuss}
T. Reuss,
\newblock {\em The Determinant Method and Applicatitions},
\newblock PhD thesis, Oxford, 2015.

\bibitem{Tolev}
D. I. Tolev,
\newblock {\em On the number of pairs of positive integers $x, y \le H$ such that
$x^2 + y^2 + 1$ is square-free},
\newblock Monatsh. Math. 165 (2012) 557-567.

\bibitem{ZD}
G.-L. Zhou, Y. Ding,
\newblock {\em On the square-free values of the polynomial $x^2+ y^2+ z^2+ k$},
\newblock Journal of Number Theory. 236 (2022) 308-322.


\end{thebibliography}
\end{document}